\colorlet{cite}{red}
\tikzset{ 
  baseline=-2.3pt,
  text height=1.5ex, text depth=0.25ex,
  >=stealth,
  node distance=2cm,
  mid/.style={fill=white,inner sep=2.5pt},
}
\newtheoremstyle{mydef}
  {}		
  {}		
  {}		
  {}		
  {\scshape}	
  {. }		
  { }		
  {\thmname{#1}\thmnumber{ #2}\thmnote{ #3}}	
\newtheorem{theorem}{Theorem}[section]
\newtheorem*{theorem*}{Theorem}
\newtheorem{proposition}[theorem]{Proposition}
\newtheorem*{proposition*}{Proposition}
\newtheorem{lemma}[theorem]{Lemma}
\newtheorem*{lemma*}{Lemma}
\newtheorem{corollary}[theorem]{Corollary}
\newtheorem*{corollary*}{Corollary}
\theoremstyle{definition}
\newtheorem{definition}[theorem]{Definition}
\newtheorem{example}[theorem]{Example}
\theoremstyle{remark}
\newtheorem{remark}[theorem]{Remark}
\newtheorem*{conjecture*}{Conjecture}
\newcommand{\rr}{\rightrightarrows}
\newcommand{\F}{\mathcal{F}}
\author{Daniel L\'opez-Garcia {\tiny and} Fabricio Valencia}
\subjclass[2020]{22A22, 57R18, 57R70}
\address{}
\date{\today}
\dedicatory{Dedicated to Amaru}
\address{D. L\'opez-Garcia - Instituto de Matem\'atica e Estat\'istica, Universidade Federal Fluminense,  Rua Prof. Marcos Waldemar de Freitas Reis s/n, 24210-201 Niter\'oi - Brazil.  \newline  
      \phantom{xx}
       F. Valencia - Instituto de Matem\'atica e Estat\'istica, Universidade de S\~ao Paulo, Rua do Mat\~ao 1010, Cidade Universit\'aria, 05508-090 S\~ao Paulo - Brazil.
      \newline  
      \phantom{xx}
  danielflg@id.uff.br, fabricio.valencia@ime.usp.br}
\title{Topology of singular foliations of closed 1-forms on orbifolds}
\begin{document}
\maketitle

\begin{abstract}
We study the topological properties of the leaves of the singular foliation induced by a closed 1-form of Morse type on a compact orbifold. In particular, we establish criteria that characterize when all such leaves are compact, when they are non-compact, and how both types may coexist. As an application, we extend to the orbifold setting a celebrated result of Calabi, which provides a purely topological characterization of intrinsically closed harmonic 1-forms of Morse type.
\end{abstract}

\tableofcontents
\section{Introduction}

Classical Morse theory is a powerful tool that enables one to extract both geometric and topological information from a manifold by studying a smooth real-valued function whose critical points are all nondegenerate. In the early 1980s, Novikov initiated a generalization of Morse theory by considering the zeros of closed 1-forms instead of the critical points of functions, an approach that led to numerous applications in geometry, topology, analysis, and dynamical systems (see, e.g. \cite{F} and its quoted references). Novikov's initial ideas were based on constructing a chain complex using the dynamics of the gradient flow lifted to the abelian cover associated with the cohomology class of a given closed 1-form \cite{No1, No2}. This approach provided a natural extension of the classical Morse inequalities, relying on the key observation that, in Morse theory, the dynamics of the gradient flow traditionally serve as a bridge between the critical set of a function and the global topology of the underlying space. Two decades later, Calabi raised the problem of whether it is possible to improve the Novikov inequalities for closed 1-forms with Morse-type zeros if one additionally assumes that the 1-form is harmonic with respect to a Riemannian metric \cite{Calabi}, thus introducing the notion of transitivity. These kinds of questions motivated Farber--Katz--Levine \cite{FKL} and Honda \cite{Honda} to explore transitive closed 1-forms, leading to a detailed study of the geometric properties of the singular foliations defined by closed 1-forms with Morse-type zeros.

This paper aims to extend the main results of both Calabi and Farber--Katz--Levine to the setting of compact orbifolds, which are among the simplest singular spaces that still support smooth-type structures. Morse theory on orbifolds, as well as on other types of singular spaces, has been widely studied, and the existing literature on the subject is extensive (see, e.g., \cite{ChoHong,Corrigan,GM,Hep,HolmMatsu,LT,OV}). Nevertheless, a full extension of Novikov theory to the setting of orbifolds, capable of handling closed 1-forms with Morse-type zeros, remains largely undeveloped, with the exception of the recent work \cite{V}, where preliminary results in this direction have been established and Novikov-type inequalities for compact orbifolds have been proven.

To state our main results, we first briefly introduce some terminology, following primarily the conventions and definitions in \cite{V}. We use proper and étale Lie groupoids as atlases for orbifolds, allowing us to view an orbifold as the orbit space of a Lie groupoid of this type. Such a  perspective proves particularly useful when working with various geometric and algebraic structures on orbifolds, or more generally, on singular orbit spaces. Let $X$ be a compact orbifold presented by a proper and étale Lie groupoid $G\rr M$. We think of the orbifold fundamental group $\Pi_1^\textnormal{orb}(X,[x])$ as the set consisting of $G$-homotopy classes of $G$-loops at $x\in M$ endowed with the group product induced by concatenation of $G$-paths \cite{BH,MoMr}. If $\overline{\omega}$ is a closed 1-form on $X$ presented by a closed basic 1-form $\omega$ on $M$, then we define the \emph{singular foliation} $\tilde{\mathcal{F}}_\omega$ of $X$ as the decomposition of $X$ into leaves defined as follows. Two points $[x]$ and $[y]$ in $X$ are said to be related, or to lie on the same leaf, if and only if there exists a smooth $G$-path $\sigma=\sigma_ng_n\sigma_{n-1}\cdots \sigma_1g_1\sigma_0$ in $M$ such that $\sigma_0(0)=x$, $\sigma_n(1)=y$ and $\omega(\dot{\sigma}_k(\tau))=0$ for all $k=0,1,\cdots,n$ and $\tau\in [0,1]$. The leaves of $\tilde{\mathcal{F}}_\omega$ containing the zeros of $\overline{\omega}$ are called singular. We say that $\overline{\omega}$ is generic if each singular leaf of $\tilde{\mathcal{F}}_\omega$ contains precisely one zero of $\overline{\omega}$. If $\overline{\mathcal{L}}$ is a singular leaf then its singular leaf components are defined to be the closures in $\overline{\mathcal{L}}$ of the connected components of $\overline{\mathcal{L}}\backslash  \lbrace [x]: \overline{\omega}([x])=0\rbrace$.

In these terms, our first main result characterizes when all leaves of the singular foliation $\tilde{\mathcal{F}}_\omega$ are compact and explains how compact and non-compact leaves coexist in $X$. This can be stated as follows.

\begin{theorem}\label{thm1}
Let $X$ be a compact orbifold of dimension $n$ and let $\overline{\omega}$ denote a closed 1-form of Morse type on $X$ representing a cohomology class $\xi\in H^1(X)$.
\begin{enumerate}
\item All leaves of the singular foliation $\tilde{\mathcal{F}}_\omega$ are
compact if and only if the homomorphism of periods $\Pi_1^{\textnormal{orb}}(X)\to (\mathbb{R},+)$ induced by $\xi$ can be factorized 
as 
$$\Pi_1^{\textnormal{orb}}(X)\to F\to (\mathbb{R},+),$$
through a free group $F$. 
\item $X$ is the union of two compact $n$-dimensional suborbifolds $X_c$ and $X_\infty$ with a common, possibly singular, boundary satisfying:
\begin{enumerate}
	\item $\textnormal{int}(X_c)$ is the union of all of the compact leaves (nonsingular and singular) plus some compact singular leaf components of non-compact singular leaves,
	\item $\textnormal{int}(X_\infty)$ is the union of all noncompact nonsingular leaves, all noncompact singular leaf components and some compact singular leaf components of noncompact singular leaves,
	\item $\partial  X_c=\partial X_\infty=X_c\cap X_\infty$ is the union of finitely many compact singular leaf components of noncompact singular leaves. It is an orbifold except at finitely many points which are zeros of $\overline{\omega}$,
	\item if $X_\infty\neq \emptyset$ then the rank of the cohomology class $\xi_\infty=\xi|_{X_\infty}\in H^1(X_\infty)$ is greater than $1$, and
	\item if $\overline{\omega}$ is generic then the boundary $\partial  X_c=\partial X_\infty=X_c\cap X_\infty$ is the union of all compact singular leaf components of non-compact singular leaves. It is a closed $(n-1)$-dimensional topological orbifold.
\end{enumerate}
\end{enumerate}
\end{theorem}

A closed 1-form $\overline{\omega}$ on $X$ is called intrinsically harmonic if it is harmonic with respect to some Riemannian metric on $X$. Equivalently, the closed basic 1-form $\omega$ on $M$ presenting $\overline{\omega}$ is harmonic with respect to some groupoid Riemannian metric on $G\rr M$. A smooth $G$-path $\sigma=\sigma_ng_n\sigma_{n-1}\cdots \sigma_1g_1\sigma_0$ in $M$ is said to be $\omega$-positive if $\omega(\dot{\sigma}_k(\tau))>0$ for all $k=0,1,\cdots,n$ and $\tau\in [0,1]$. Geometrically, this means that the velocity vector $\dot{\sigma}_k(\tau)$ of each path $\sigma_k(\tau)$ always points in the direction in which $\overline{\omega}$ increases. Accordingly, a closed 1-form $\overline{\omega}$ on $X$ is called \emph{transitive} if for any point $[x]\in X\backslash \textnormal{zeros}(\overline{\omega})$ there exists an $\omega$-positive smooth $G$-loop at $x\in M$. 

Using the topological description for the leaves of the singular foliation $\tilde{\mathcal{F}}_\omega$ provided in Theorem \ref{thm1}, we can also show a purely topological characterization of intrinsically closed harmonic 1-forms of Morse type on compact orbifolds in terms of transitivity. This is the content of our second main result.

\begin{theorem}\label{thm3}
	Let $X$ be a compact orbifold. A closed 1-form of Morse type on $X$ is intrinsically harmonic if and only if it is transitive. 
\end{theorem}

It is worth stressing that, building upon the works \cite{FKL,Honda}, several other interesting results concerning the Morse-theoretic properties of harmonic closed 1-forms on compact orbifolds can be established by employing similar techniques (see for instance Corollaries \ref{cor1} and \ref{cor2} and Proposition \ref{pro1}). Such results may be used to determine whether one can improve the topological lower bounds for the number of zeros of a closed 1-form of Morse type on a compact orbifold. As alluded to previously, the first attempt in that direction is provided by the Novikov type inequalities, which were recently established in \cite{V}.

The paper is organized as follows. In Section \ref{sec:2}, we provide a brief overview of the main notions and terminology used throughout the paper, closely following \cite{V}. In particular, we define closed 1-forms of Morse type, introduce the notions of $G$-paths and the orbifold fundamental group, and construct the homomorphism of periods associated with closed 1-forms on orbifolds. Section \ref{sec:3} is devoted to study some topological properties of a natural foliation-type object related to any closed 1-form of Morse type on a compact orbifold, which we refer to as the singular foliation. After showing some preliminary results, we characterize when all leaves of such singular foliations are compact and give a complete description of how compact and non-compact leaves co-occur within the ambient orbifold. This is the content of Theorem \ref{thm1}. In Section \ref{sec:4}, we deal with intrinsically harmonic closed 1-forms on Riemannian orbifolds and define the notion of transitivity in terms of positive $G$-paths. We prove several interesting results around the notion of transitive closed 1-form of Morse type, thus employing some of them to characterize harmonic closed 1-forms of Morse type by means of transitivity, see Theorem \ref{thm3}. Finally, in Section \ref{sec:5}, we provide some examples which allow to illustrate our main results. Most of these examples are obtained by adapting the connected sum operation constructions developed in \cite{FKL}.

\vspace{.2cm}
{\bf Acknowledgments:}  We are grateful to Fernando Studzinski for several insightful discussions. D. L\'opez-Garcia was supported by Grant 2022/04705-8 Sao Paulo Research Foundation - FAPESP. F. Valencia was supported by Grant 2024/14883-6 Sao Paulo Research Foundation - FAPESP.

\section{Preliminaries}\label{sec:2}

In this short section we briefly introduce the main notions and terminology that we will be using throughout the paper. We assume that the reader is familiar with the notion of Lie groupoid and the geometric/topological aspects underlying its structure \cite{dH,MoMr0}. Let $G\rr M$ be a Lie groupoid. The structural maps of $G$ are denoted by $(s,t,m,u,i)$ where $s,t:G\to M$ are the maps respectively indicating the source and target of the arrows, $m:G^{(2)}\to G$ stands for the partial composition of arrows, $u:M\to G$ is the unit map which sometime we shall denote by $u(x):=1_x$ for all $x\in M$, and $i:G\to G$ is the map determined by the 
inversion of arrows. Here we are denoting by $G^{(2)}=\lbrace (g,h): s(g)=t(h)\rbrace\subseteq G\times G$ the manifold of composable arrows. The orbit space $M/G$ of $G\rr M$ is denoted by $X$ and the canonical orbit projection by $\pi:M\to X$. If $\mathcal{O}_x$ denotes the groupoid orbit through $x\in M$ then we shall often consider its normal bundle $\nu(\mathcal{O}_x)$ as the bundle whose fiber at $x$ is given by $\nu_x(\mathcal{O}_x)=T_xM/T_x\mathcal{O}_x$.

We say that a Lie groupoid $G\rr M$ is a \emph{proper} if the source/target map $(s,t):G\to M\times M$ is proper. In this case, the groupoid orbits $\mathcal{O}_x$ are embedded in $M$, the isotropy groups $G_x$ are compact, and the orbit space $X$ is Hausdorff, second-countable, and paracompact \cite{dH}. Moreover, the orbit projection $\pi:M\to X$ is an open map. The Lie groupoid is said to be \emph{étale} if either $s$ or $t$ is a local diffeomorphism, meaning that $G$ and $M$ have the same dimension. In particular, the isotropies $G_x$ and the orbits $\mathcal{O}_x$ are discrete, so that $\nu_x(\mathcal{O}_x)=T_xM$. We use proper étale groupoids as orbifold atlases, so that we can think of an orbifold as being the orbit space of a Lie groupoid of this kind \cite{BX,Ler}. Such a point of view turns out to be quite useful when dealing with several geometric and algebraic notions concerning the structure of an orbifold, or even a singular orbit space in general.

From now on we assume that $G\rr M$ is proper étale Lie groupoid presenting a compact orbifold $X$. It is well-known that differential forms on $X$ are completely determined by basic forms on $M$, compare \cite{PPT,TuX,Wa}. That is, differential forms $\omega$ on $M$ such that $(t^\ast-s^\ast)(\omega)=0$. The space of basic differential forms on $M$ is denoted by $\Omega_{\textnormal{bas}}^\bullet(G)$ and it is identified with the space of differential forms on $X$ that is denoted by $\Omega^\bullet(X)$. Note that the de Rham differential over $\Omega^\bullet(M)$ restricts to $\Omega_{\textnormal{bas}}^\bullet(G)$, thus yielding the so-called \emph{basic cohomology} $H_{\textnormal{bas}}^\bullet(G,\mathbb{R})$ of $G$. This cohomology is Morita invariant, so that it recovers the de Rham cohomology $H^\bullet(X)$ of $X$. Additionally, there is an isomorphism $H^\bullet(X,\mathbb{R})\cong H_{\textnormal{bas}}^\bullet(G,\mathbb{R})$, where $H^\bullet(X,\mathbb{R})$ stands for the singular cohomology of $X$. It is worth mentioning that $X$ can be triangulated so that its cohomology $H^\bullet(X)\approx H_{\textnormal{bas}}^\bullet(G,\mathbb{R})$ becomes a finite dimensional vector space \cite{PPT}.

\subsection{Closed 1-forms of Morse type}

Morse functions on $X$ are completely determined by basic functions on $M$ whose critical orbits are nondegenerate in the sense of Morse--Bott theory \cite{Bo,Hep,OV}. Motivated by the approach presented in \cite{OV} to address  Morse theory for differentiable stacks, the notion of stacky closed 1-form of Morse type was recently introduced in \cite{V}. Let $\omega$ be a closed basic $1$-form on $M$. The set of zeros of $\omega$ is saturated in $M$, so that it is formed by a disjoint union of groupoid orbits. Therefore, we say that an orbit $\mathcal{O}_x$ of zeros of $\omega$ is \emph{nondegenerate} if its normal Hessian is a nondegenerate fiberwise bilinear symmetric form on $\nu(O_x)$. Accordingly, $\omega$ is said to be of \emph{Morse type} if all of its orbits of zeros are nondegenerate. In order to be precise, one knows that for each groupoid orbit $\mathcal{O}_x$ there exist an open neighborhood $\mathcal{O}_x\subset U\subset M$ and a basic smooth function $f_U\in \Omega_{\textnormal{bas}}^0(G|_{U})$ such that $\omega|_U=df_U$, see \cite[Lem. 8.5]{PPT}. If $U$ is connected then the function $f_U$ is determined by $\omega|_U$ uniquely up to a constant. In particular, $\textnormal{zeros}(\omega|_{U})=\textnormal{Crit}(f_U)$, so that the \emph{normal Hessian} of $\omega$ along an orbit of zeros $\mathcal{O}_x$ is defined to be the the normal Hessian of $f_U$ along the critical orbit $\mathcal{O}_x$. Hence, an orbit of zeros $\mathcal{O}_x$ of $\omega$ is \emph{nondegenerate} if and only if $f_U$ is nondegenerate along $\mathcal{O}_x$.

Closed basic 1-forms of Morse type are Morita invariant (see \cite[Prop. 3.8]{V}), so they give rise to the following definition.

\begin{definition}
A \emph{closed 1-form of Morse type} on $X$ is defined to be the Morita equivalence class $\overline{\omega}$ of a closed basic $1$-form of Morse type $\omega$ on $M$.
\end{definition}

We think of $\overline{\omega}$ as the assignment $X\ni [x]\mapsto \overline{\omega}([x])=\omega(x)$ for each $ x\in M$, which is clearly well-defined. In consequence, a zero $[x]$ of $\overline{\omega}$ corresponds to an orbit of zeros $\mathcal{O}_x$ of $\omega$, so that it is nondegenerate if and only if its corresponding orbit is nondegenerate. 

Let us fix a Riemannian metric on $X$, which can be thought of as an equivalence class of a groupoid Riemannian metric on $G\rr M$ in the sense of del Hoyo and Fernandes in \cite{dHF,dHF2}. Since the normal Hessian $\textnormal{Hess}(f_U)$ is nondegenerate it follows that the normal bundle $\nu(\mathcal{O}_x)$ splits into the Whitney sum of two subbundles $\nu_-(\mathcal{O}_x)\oplus \nu_+(\mathcal{O}_x)$ such that $\textnormal{Hess}(f_U)$ is strictly negative on $\nu_-(\mathcal{O}_x)$ and strictly positive on $\nu_+(\mathcal{O}_x)$. From \cite[Lem. 5.4]{OV} we know that $\textnormal{Hess}(f_U)$ is invariant with respect to the normal representation $G_x\curvearrowright \nu(\mathcal{O}_x)_x$, so that it preserves the splitting above since the normal representation is by isometries in this case. In consequence, we get a normal sub-representation $G_x\curvearrowright \nu_-(\mathcal{O}_x)_x$, inducing actions of $G_x$ over the unit disk $D_-(\mathcal{O}_x)_x$ and its boundary $\partial D_-(\mathcal{O}_x)_x$, both of which sit clearly inside $\nu_-(\mathcal{O}_x)_x$. The \emph{index data} of $\overline{\omega}$ at the zero $[x]\in X$ is defined as the pair $\lambda_{[x]}=(\lambda_x,G_x)$, where $\lambda_x=\textnormal{rk}(\nu_-(\mathcal{O}_x))$ is called the \emph{index} of $[x]$. Note that the basic function $f_U:U\to \mathbb{R}$ induces a function $F_U:[U/G_U]\subset X\to \mathbb{R}$. Hence, if $[a,b]\subset \mathbb{R}$ is a real interval such that $F_U^{-1}[a,b]$ contains no critical points besides $[x]$ then $\lbrace F_U\leq b\rbrace$ has the homotopy type of $\lbrace F_U\leq a\rbrace$ with a copy of $D_-(\mathcal{O}_x)_x/G_x$ attached along $\partial D_-(\mathcal{O}_x)_x/G_x$, see \cite{Hep,OV}. 

\subsection{Homomorphism of periods}

Before defining the homomorphism of periods of any closed 1-form on $X$ we need to introduce the the notion of $G$-path in $M$ which is a crucial concept in this paper. The reader is recommended to consult \cite[s. 3.3]{MoMr} and \cite[c. G; s. 3]{BH} for specific details. As alluded to previously, we think of paths in $X$ as $G$-paths in $M$. Namely, a \emph{$G$-path} in $M$ is a sequence $\sigma:=\sigma_ng_n\sigma_{n-1}\cdots \sigma_1g_1\sigma_0$ where $\sigma_0,\cdots,\sigma_n:[0,1]\to M$ are (piecewise smooth) paths in $M$ and $g_1,\cdots,g_n$ are arrows in $G$ such that $g_j:\sigma_{j-1}(1)\to \sigma_{j}(0)$ for all $j=1,\cdots,n$. One says that $\sigma$ is a $G$-path of \emph{order} $n$ from $\sigma_{0}(0)$ to $\sigma_{n}(1)$. Our groupoid $G$ is said to be \emph{$G$-connected} if for any two points $x,y\in M$ there exists a $G$-path from $x$ to $y$. We always assume that the groupoids we are working with are $G$-connected unless otherwise stated. If $\sigma':=\sigma_n'g_n'\sigma_{n-1}'\cdots \sigma_1'g_1'\sigma_0'$ is another $G$-path in $M$ with $\sigma_{0}'(0)=\sigma_{n}(1)$ then we can \emph{concatenate} $\sigma$ and $\sigma'$ into a new $G$-path 
$$\sigma\ast \sigma'=\sigma_n'g_n'\sigma_{n-1}'\cdots \sigma_1'g_1'\sigma_0'1_{\sigma_{n}(1)}\sigma_ng_n\sigma_{n-1}\cdots \sigma_1g_1\sigma_0.$$

We define an equivalence relation in the set of $G$-paths in $M$ which is generated by the following \emph{multiplication equivalence}
$$\sigma_n g_n\cdots \sigma_{j+1}g_{j+1}\sigma_jg_j\sigma_{j-1}\cdots g_1\sigma_0 \quad\sim\quad \sigma_n g_n\cdots \sigma_{j+1}g_{j+1}g_j\sigma_{j-1}\cdots g_1\sigma_0,$$
if $\sigma_j$ is the constant path for any $0<j<n$, and  \emph{concatenation equivalence}
$$\sigma_n g_n\cdots g_{j+1}\sigma_jg_j\sigma_{j-1}g_{j-1}\cdots g_1\sigma_0 \quad\sim\quad \sigma_n g_n\cdots g_{j+1}\sigma_j\cdot\sigma_{j-1}g_{j-1}\cdots g_1\sigma_0,$$
if $g_j=1_{\alpha_{j-1}(1)}$ for any $0<j<n$ where $\sigma_j\cdot\sigma_{j-1}$ stands for the standard concatenation of the paths $\sigma_j$ and $\sigma_{j-1}$. A \emph{deformation} between two $G$-paths $\sigma$ and $\sigma'$ of the same order $n$ from $x$ to $y$ consists of homotopies $D_j:[0,1]\times [0,1]\to M$ from $\sigma_j$ to $\sigma_j'$ for $j=0,1,\cdots, n$ and paths $d_i:[0,1]\to G$ from $g_j$ to $g_j'$ for $j=1,\cdots, n$ such that $s\circ d_j=D_{j-1}(\cdot,1)$ and $t\circ d_j=D_{j}(\cdot,0)$ for $j=1,\cdots, n$ verifying $D_0([0,1],0)=x$ and $D_n([0,1],1)=y$. That is, a deformation is a continuous family of $G$-paths of order $n$ from $x$ to $y$ which may be written as $D_n(\tau,\cdot)d_n(\tau)\cdots d_1(\tau)D_0(\tau,\cdot)$ for $\tau\in [0,1]$. Accordingly, two $G$-paths with fixed endpoints in $M$ are said to be \emph{$G$-homotopic} if it is possible to pass from one to another by a sequence of equivalences and deformations. With the multiplication induced by concatenation it follows that the $G$-homotopy classes of $G$-paths form a Lie groupoid over $M$ which is called the \emph{fundamental groupoid} of $G$ and is denoted by $\Pi_1(G)\rr M$. The \emph{fundamental group} of $G$ with respect to a base-point $x_0\in M$ is the isotropy group $\Pi_1(G, x_0):= \Pi_1(G)_{x_0}$. Note that it consists of $G$-homotopy classes of $G$-loops at $x_0$ which are by definition the $G$-homotopy classes of $G$-paths from $x_0$ to $x_0$. It is simple to check that for any two different points $x_0,y_0\in M$ it holds that $\Pi_1(G, x_0)$ and $\Pi_1(G, y_0)$ are isomorphic by $G$-connectedness. Every Lie groupoid morphism $\phi_1:G\to G'$ covering $\phi_0:M\to M'$ induces another Lie groupoid morphism $(\phi_1)_\ast:\Pi_1(G)\to \Pi_1(G')$ by mapping $[\sigma]$ to $[(\phi_1)_\ast(\sigma)]$, where $(\phi_1)_\ast(\sigma)=(\phi_0)_\ast(\sigma_n)\phi_1(g_n)(\phi_0)_\ast(\sigma_{n-1})\cdots (\phi_0)_\ast(\sigma_1)\phi_1(g_0)(\phi_0)_\ast(\sigma_0)$. This also covers $\phi_0:M\to M'$ so that it induces a Lie group homomorphism between the fundamental groups  $(\phi_1)_\ast:\Pi_1(G,x_0)\to \Pi_1(G',\phi_0(x_0))$. As an important feature, we have that Morita equivalent groupoids have isomorphic fundamental groupoids, so that we define the \emph{orbifold fundamental group} of $X$ at $[x_0]$ as $\Pi_1^\textnormal{orb}(X,[x_0])=\Pi_1(G,x_0)$.

\begin{remark}\label{remark1}
There are two interesting scenarios in which we can give a description of the orbifold fundamental group in much more familiar terms. First, let $(M,\mathcal{F})$ be a proper foliation and let $X\approx M/\mathcal{F}$ denote the orbifold presented by the holonomy groupoid $\textnormal{Hol}(M,\mathcal{F})\rr M$. If all principal leaves are 1-connected then one has an isomorphism
$$\Pi_1^\textnormal{orb}(X,[x_0])=\Pi_1(M,x_0),$$
where $\Pi_1(M,x_0)$ stands for the standard fundamental group of $M$ at $x_0$, see \cite[Lem. 6.1]{CFMT}. Second, let us suppose that $K$ is a discrete group acting properly and effectively on a smooth manifold $M$, so that the action groupoid $K\ltimes M\rr M$ represents an orbifold $X\approx M/K$. The set of $(K\ltimes M)$-loops based at $x\in M$ is in one-to-one correspondence with the set of pairs $(\sigma,k)$ where $\sigma:[0,1]\to M$ is a smooth path starting at $x$ and $k\in K$ is such that $k\sigma(0)=\sigma(1)$, compare \cite[p. 608]{BH} and \cite[p. 192]{MoMr}. Additionally, there exists a short exact sequence of groups:
\begin{equation*}
	1\to \Pi_1(M,x_0)\to \Pi_1^\textnormal{orb}(X,[x_0])\to K\to 1.
\end{equation*}
\end{remark}

\begin{remark}\label{remark2}
We can also explicitly describe the space of differential forms on the types of orbifolds introduced in Remark \ref{remark1}. Indeed, differential forms on $M/\mathcal{F}$ are completely determined by differential forms on $M$ which are \emph{basic} and \emph{horizontal} with respect to the foliation $\mathcal{F}$. That is, differential forms $\alpha\in \Omega^\bullet (M)$ satisfying $\iota_v\alpha=0$ and $L_v\alpha=0$ for all $v$ tangent to the leaves. Similarly, differential forms on $M/K$ are completely determined by differential forms on $M$ which are $K$-invariant and horizontal, the latter term meaning that it vanishes on vectors tangent to the $K$-orbits. As shown in \cite{Wa}, a differential form is basic and horizontal in the previous senses if and only if it is a basic differential form on the corresponding holonomy and actions groupoids.
\end{remark}

Let $\overline{\omega}$ be a closed 1-form on $X$ presented by a closed basic 1-form $\omega$ on $M$. For each smooth $G$-path $\sigma=\sigma_ng_n\sigma_{n-1}\cdots \sigma_1g_1\sigma_0$ in $M$ we define the $G$-\emph{path integral}
$$\int_{\sigma}\overline{\omega}=\sum_{k=0}^{n}\int_{\sigma_k}\omega.$$

\noindent If $[\sigma]$ denotes the $G$-homotopy class of $\sigma$ then the expression $\int_{[\sigma]}\overline{\omega}=\int_{\sigma}\overline{\omega}$ is well-defined. More importantly, suppose that $\omega$ and $\omega'$ are cohomologous closed basic 1-forms. That is, there is a basic smooth function $f:M\to \mathbb{R}$ such that $\omega-\omega'=df$. Hence, it is simple to check that $\int_\sigma (\omega-\omega')=f(\sigma_n(1))-f(\sigma_0(0))$, implying that the expression $\int_{[\sigma]}\xi=\int_{\sigma}\overline{\omega}$ is also well-defined for the cohomology class $\xi=[\omega]\in H^\bullet(X)$, provided that $\sigma$ is a $G$-loop. Additionally, we get a well-defined group homomorphism $\textnormal{Per}_{\xi}:\Pi_1^\textnormal{orb}(X,[x_0])\to (\mathbb{R},+)$ by sending $[\sigma]\mapsto \int_{\sigma}\overline{\omega}$, which is completely determined by $\xi$. We refer to $\textnormal{Per}_{\xi}$ as the \emph{homomorphism of periods} associated with the cohomology class $\xi$. See \cite{V} for details.

\section{The singular foliation}\label{sec:3}

This section is devoted to the study of some topological properties of a natural foliation-type object associated with any closed 1-form of Morse type on a compact orbifold. Let $X$ be a compact orbifold presented by a proper étale Lie groupoid $G\rr M$ and let $\overline{\omega}$ denote a closed 1-form of Morse type on $X$ presented by a closed basic 1-form $\omega$ of Morse type on $M$. The \emph{basic foliation} of $M$ with respect to $\omega$ is by definition the decomposition of $M$ into leaves determined by the following equivalence relation: two points $x,y\in M$ belong to the same ``\emph{leaf}'' if and only if there exists a smooth $G$-path $\sigma=\sigma_ng_n\sigma_{n-1}\cdots \sigma_1g_1\sigma_0$ in $M$ such that $\sigma_0(0)=x$, $\sigma_n(1)=y$ and $\omega(\dot{\sigma}_k(\tau))=0$ for all $k=0,1,\cdots,n$ and $\tau\in [0,1]$. We denote such a basic foliation by $\mathcal{F}_\omega$. From \cite{OV,V} it follows that there are finitely many leaves of $\mathcal{F}_\omega$ containing the orbits of zeros of $\omega$. We refer to those leaves containing the orbits of zeros of $\omega$ as being \emph{singular}.
 
Let us assume for a moment that $\omega=df$ with $f:M\to \mathbb{R}$ basic. 
 
 \begin{lemma}\label{Lemma1}
 Suppose that there exists a smooth $G$-path $\sigma=\sigma_ng_n\sigma_{n-1}\cdots \sigma_1g_1\sigma_0$ in $M$ such that $\sigma_0(0)=x$, $\sigma_n(1)=y$, and $df(\dot{\sigma}_k(\tau))=0$ for all $k=0,1,\cdots,n$ and $\tau\in [0,1]$. Then, $x$ and $y$ sit inside the same level set of $f$.
 \end{lemma}
 \begin{proof}
 This directly follows from the result proved in \cite[s. 3.1]{V}. In fact, since $f$ is basic we have that
 	$$f(\sigma_n(1))-f(\sigma_0(0))=\int_{\sigma}df=\sum_{k=0}^n\int_{0}^1df_{\sigma_k(\tau)}(\dot{\sigma}_k(\tau))=0.$$
 
 \end{proof}
 
In general, we know that for each nondegenerate orbit of zeros $\mathcal{O}_x$ of $\omega$ there exist\footnote{Recall that this is actually true for every groupoid orbit.} a connected open neighborhood $\mathcal{O}_x\subset U$ and a basic smooth function $f_U:U\to \mathbb{R}$ determined up to constant such that $\omega|_U=df_U$. Therefore, by Lemma \ref{Lemma1}, the foliation $\mathcal{F}_\omega$ in $U$ is determined by the level sets of $f_U$. By choosing a covering $\lbrace U\rbrace$ of $M$ we might think that all these foliations match together to form the foliation $\mathcal{F}_\omega$ of $M$. 
 
As alluded to previously, the foliation $\mathcal{F}_\omega$ has finitely many singular leaves which contain the nondegenerate orbits of zeros of $\omega$. One can easily describe their structure locally around each nondegenerate orbit $\mathcal{O}_x$. In fact, by the groupoid Morse lemma (see \cite[Thm. 5.2]{OV}) and by shrinking $U$ if necessary, there exists a groupoid tubular neighborhood (i.e. a linearization) $\phi: (\nu(G_{\mathcal{O}_x})_V\rr V)\xrightarrow[]{\cong} (G_U\rightrightarrows U)$ such that $\phi^\ast F_U=Q_{F_U}$, where $F_U:G_U\to \mathbb{R}$ is given by $s^\ast f_U=t^\ast f_U$. Here $Q_{F_U}:\nu(G_{\mathcal{O}_x})\to \mathbb{R}$ stands for the basic function which is quadratic along the normal fibers and defined by $Q_{F_U}(v)=\frac{1}{2}\textnormal{Hess}_{\pi(v)}(F_U)(v,v)$
where $\pi:\nu(G_{\mathcal{O}_x})\to G_{\mathcal{O}_x}$ is the bundle projection. Therefore, the latter implies that the structure of $\mathcal{F}_\omega$ around $\mathcal{O}_x$ has the form $Q_{f_U}(v)=\textnormal{constant}$.

We are interested in expressing the notions mentioned above as corresponding notions for the orbifold $X$. Let $\mathcal{L}$ be a leaf of $\mathcal{F}_\omega$. On the one hand, from the very definition it is simple to check that if $x$ belongs to $\mathcal{L}$ then the whole groupoid orbit $\mathcal{O}_x$ through $x$ is contained in $\mathcal{L}$. In particular, the leaves of $\mathcal{F}_\omega$ are saturated in $M$, as they are formed by unions of groupoid orbits, and we can define topological subgroupoids $G_\mathcal{L}\rr \mathcal{L}$ of $G\rr M$ which are honest Lie subgroupoids provided $\mathcal{L}$ does not contain orbits of zeros of $\omega$. By pushing the leaves $\mathcal{L}$ of $\mathcal{F}_\omega$ down by means of the orbit projection $\pi:M\to X$ we obtain a partition $\tilde{\mathcal{F}}_\omega$ of $X$ by subspaces $\overline{\mathcal{L}}$ which we refer to as the \emph{singular foliation} in $X$ induced by $\overline{\omega}$. Note that $[x],[y]\in \overline{\mathcal{L}}$ if and only if $x,y\in \mathcal{L}$. On the other hand, two points $x,y\in M$ lie on different leaves if for any smooth $G$-path $\sigma$ from $x$ to $y$ it follows that the $G$-path integral $\int_{\sigma}\overline{\omega}\neq 0$. Similarly for points in $X$.

\begin{remark}\label{RemakrStructure}
There is also a way to locally describe the structure of the leaves of $\tilde{\mathcal{F}}_\omega$ around any point in $X$. The basic function $f_U:U\to \mathbb{R}$ induces a function $[U/G_U]\subset X\to \mathbb{R}$, so that $\overline{\omega}|_{U/G_U}=\overline{df_U}$. If $U/G_U$ does not contain zeros of $\overline{\omega}$ (i.e. $U$ does not contain orbits of zeros of $\omega$) then the structure of the leaves of $\tilde{\mathcal{F}}_\omega$ around $U/G_U$ is given by fibers (level sets) $\overline{f}_U^{-1}(c)$ which correspond to the orbifold $[f_U^{-1}(c)/F_U^{-1}(c)]$ presented by the proper étale Lie groupoid $F_U^{-1}(c)\rr f_U^{-1}(c)$. As expected, $\dim [f_U^{-1}(c)/F_U^{-1}(c)]=\dim X-1$. Otherwise, if $U/G_U$ contains a zero $[x]$ of $\overline{\omega}$ then the structure of $\tilde{\mathcal{F}}_\omega$ around $[x]$ has the form $\overline{Q}_{F_U}([v])=\textnormal{constant}$, where $\overline{Q}_{F_U}$ is the quadratic form associated to the Hessian $\overline{\textnormal{Hess}}_{[x]}(F_U)$ on $T_{[x]}X\approx \nu_x(\mathcal{O}_x)/G_x$. The latter is completely determined by the normal Hessian $\textnormal{Hess}_x(f_U)$ along $\mathcal{O}_x$, since it is invariant by the action of the normal isotropy representation $G_x\curvearrowright \nu_x(\mathcal{O}_x)$. More concretely, let us consider the splitting $\nu_-(\mathcal{O}_x)\oplus \nu_+(\mathcal{O}_x)$ where $\textnormal{Hess}(f_U)$ is strictly negative on $\nu_-(\mathcal{O}_x)$ and strictly positive on $\nu_+(\mathcal{O}_x)$. Recall that such a splitting has the property of being invariant under the normal action of $G_x$. Hence, using the Morse lemma once again \cite{Hep,OV}, one may think of $\overline{\omega}$, in coordinates around $[x]$, as being described by the formula
$$\omega=-\sum_{j=1}^{\lambda_x} y_jdy_j+\sum_{j=\lambda_x+1}^{\dim X} y_jdy_j,$$
with the expression on the right hand side being invariant by isotropies. See \cite{OV} for details.
\end{remark}

It is simple to check that given a compact regular leaf $\overline{ \mathcal{L}}$ there exists a neighborhood $\overline{\mathcal{L}}\subset W\subset X$ so that the singular foliation $\tilde{\mathcal{F}}_\omega$ in $W$ is given by the level sets of a function $W\to \mathbb{R}$. This fact can be thought of as a sort of local Reeb stability theorem for orbifolds (see \cite[s. 2.3]{MoMr0}), and it can be similarly proven by using the tubular neighborhood theorem for orbifolds, compare \cite[Ch. 4]{Choi}. Indeed:
	\begin{lemma}\label{Lemma:localReebStability} Let $\overline{\mathcal{L}}$ be a compact leaf of $\tilde \F_\omega$ with no zeros of $\overline{\omega}$. Then, there exists a neighborhood $W$ of $\overline{\mathcal{L}}$ and a function $\overline{f}_W:W\to \mathbb{R}$ such that $\tilde{\mathcal{F}}_{\omega}$ in $W$ is defined by the level sets of $\overline{f}_W$.
	\end{lemma}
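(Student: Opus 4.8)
The plan is to transcribe to the orbifold category the classical proof of local Reeb stability for the codimension-one foliation cut out near a compact leaf by a closed nowhere-vanishing $1$-form: I produce $\overline{f}_W$ as a local primitive of $\overline{\omega}$ and then shrink $W$ by flowing along its gradient. The external inputs are the tubular neighbourhood theorem for orbifolds (\cite[Ch.~4]{Choi}, compare \cite[s.~2.3]{MoMr0}) and the identification recalled in Section~\ref{sec:2} of basic de Rham cohomology with the homotopy-invariant singular cohomology of the underlying space. First, since $\overline{\mathcal{L}}$ contains no zeros of $\overline{\omega}$ it lies in the open suborbifold $X\setminus\textnormal{zeros}(\overline{\omega})$, on which, by Remark~\ref{RemakrStructure}, $\tilde{\mathcal{F}}_\omega$ is an honest codimension-one foliation whose plaques are the level sets of the local submersions $\overline{f}_U$; a compact leaf of such a regular foliated orbifold is a connected embedded closed $(n-1)$-dimensional suborbifold, and the inclusion $\iota\colon\overline{\mathcal{L}}\hookrightarrow X$ satisfies $\iota^\ast\overline{\omega}=0$ because $T\overline{\mathcal{L}}=\ker\overline{\omega}$ along $\overline{\mathcal{L}}$.

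Next, the orbifold tubular neighbourhood theorem provides an open neighbourhood $W_0\subset X\setminus\textnormal{zeros}(\overline{\omega})$ of $\overline{\mathcal{L}}$ that deformation retracts onto $\overline{\mathcal{L}}$; hence the restriction $H^1(W_0)\to H^1(\overline{\mathcal{L}})$ is an isomorphism and, since $\iota^\ast\overline{\omega}=0$, the class $[\overline{\omega}|_{W_0}]$ vanishes, so $\overline{\omega}|_{W_0}=d\overline{f}$ for some smooth function $\overline{f}\colon W_0\to\mathbb{R}$. Because $W_0$ avoids the zeros of $\overline{\omega}$, $\overline{f}$ is a submersion; because $\overline{\mathcal{L}}$ is connected and $d(\overline{f}\circ\iota)=\iota^\ast\overline{\omega}=0$, $\overline{f}$ is constant along $\overline{\mathcal{L}}$, say $\overline{f}|_{\overline{\mathcal{L}}}\equiv c_0$. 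Finally, since $\overline{\omega}$ annihilates vectors tangent to the leaves of $\tilde{\mathcal{F}}_\omega$, the function $\overline{f}$ is constant on each connected piece of each leaf meeting $W_0$; in particular $\overline{\mathcal{L}}$, a suborbifold of the full leaf dimension, is open in $\overline{f}^{-1}(c_0)$, and being compact it is also closed in it.

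To shrink $W_0$, fix an orbifold Riemannian metric on $X$ and consider $Y:=\nabla\overline{f}/\|\nabla\overline{f}\|^2$, a smooth orbifold vector field on $W_0$ with $Y(\overline{f})\equiv1$; for $\delta>0$ small its flow $\phi_\tau$ is defined for $|\tau|<\delta$ on a relatively compact neighbourhood of $\overline{\mathcal{L}}$. Set $W:=\bigcup_{|\tau|<\delta}\phi_\tau(\overline{\mathcal{L}})$ and $\overline{f}_W:=\overline{f}|_W\colon W\to(c_0-\delta,c_0+\delta)$, so that $\overline{f}_W^{-1}(c_0+\tau)\cap W=\phi_\tau(\overline{\mathcal{L}})$ for each $\tau$. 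Each $\phi_\tau(\overline{\mathcal{L}})$ is connected and sits inside the fibre $\overline{f}^{-1}(c_0+\tau)$, itself a union of leaf-pieces of dimension $\dim\overline{\mathcal{L}}$, so $\phi_\tau(\overline{\mathcal{L}})$ lies inside one connected component $C$ of the intersection of a leaf with $W$, whence $C\subset\overline{f}_W^{-1}(c_0+\tau)\cap W=\phi_\tau(\overline{\mathcal{L}})$ and $C=\phi_\tau(\overline{\mathcal{L}})$. Thus the leaves of $\tilde{\mathcal{F}}_\omega$ in $W$ are exactly the fibres of $\overline{f}_W$, which is the claim. The genuine content, and the only step that is not a routine re-reading of manifold arguments (using that $G$-invariant functions, metrics and vector fields, together with the flows of the latter, descend to the orbifold), is this last one: ruling out that fibres of the primitive $\overline{f}$ contain, arbitrarily close to $\overline{\mathcal{L}}$, leaf-pieces other than a single connected translate of $\overline{\mathcal{L}}$ — precisely where compactness of $\overline{\mathcal{L}}$ enters, since it lets a single normalized gradient flow trivialize $\overline{f}$ over a whole neighbourhood of $\overline{\mathcal{L}}$ and exhibit each nearby fibre as the connected set $\phi_\tau(\overline{\mathcal{L}})$.
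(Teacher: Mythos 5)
Your proposal is correct and follows essentially the same route as the paper: both arguments use a tubular neighbourhood of the compact leaf together with homotopy invariance of $H^1$ (you via the restriction isomorphism $H^1(W_0)\cong H^1(\overline{\mathcal{L}})$ and $\iota^\ast\overline{\omega}=0$, the paper via vanishing of the period homomorphism and the nondegenerate pairing) to conclude that $\overline{\omega}$ is exact near $\overline{\mathcal{L}}$. You additionally carry out the concluding step -- producing $\overline{f}_W$ and checking via the normalized gradient flow that its level sets are precisely the connected leaf pieces in $W$ -- which the paper's proof leaves implicit after establishing exactness.
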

	\begin{proof}
	First of all, one has that the pairing $H^1(X)\times H_1(X)\to \mathbb{R}$ sending $(\xi\times [\sigma])\mapsto \textnormal{Per}_{\xi}(h[\sigma])$ is nondegenerate\footnote{This follows from the well-definition of $\textnormal{Per}_\xi$ and the standard de Rham theorem for orbifolds.}. Here $h:\Pi_1^\textnormal{orb}(X)\to H_1(X)$ stands for the orbifold Hurewicz homomorphism, consult \cite[Prop. 2.2]{V}. Since $\overline{\mathcal{L}}$ is compact, then it  is a suborbifold. Thus, the exponential map defines an isomorphism $\overline{\mathcal{L}}\times I\cong W$, where $W$ is a neighborhood of $\overline{\mathcal{L}}$ in $X$. The fact that $W$ is homotopy equivalent to $\overline{\mathcal{L}}$ implies that $H_1(W)\simeq H_1(\overline{\mathcal{L}})$ and we have a nondegenerate pairing $H^1(W)\times H_1(\overline{\mathcal{L}})\to \mathbb{R}$, so that $\textnormal{Per}_{\xi|W}=0$, where $\xi=[\omega]\in H^1(X)$. That is to say, $\overline{\omega}$ is zero in $H^1(W)$. 
	\end{proof}

The following notion will be crucial later on.

\begin{definition}
A closed 1-form of Morse type $\overline{\omega}$ on $X$ is called \emph{generic} if each singular leaf of $\tilde{\mathcal{F}}_\omega$ contains precisely one zero of $\overline{\omega}$.
\end{definition}

It is clear that the condition of $\overline{\omega}$ being generic can be phrased in terms of $\omega$.

\begin{lemma}\label{lem:generic}
If $\overline{\omega}$ is a closed basic $1$-form of Morse type on $X$ then there exists a small perturbation $\overline{\omega}'$ which is generic, lies in the same cohomology class $[\omega]=[\omega']=\xi$, and satisfies $\overline{\omega}|_{U}=\overline{\omega'}|_{U}$ where $U\subset X$ is an open subset containing all zeros of $\overline{\omega}$.
\end{lemma}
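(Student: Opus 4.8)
The plan is to perturb $\overline{\omega}$ away from a neighborhood of its zero set so as to separate the singular leaves, while keeping the cohomology class fixed. Since $\overline{\omega}$ is presented by a closed basic $1$-form $\omega$ of Morse type on $M$, I would work entirely at the level of $\omega$ and the étale groupoid $G\rr M$, and then pass to the Morita class. Let $Z = \operatorname{zeros}(\omega)\subset M$ be the (finite, by properness and nondegeneracy) union of orbits of zeros, let $U$ be a $G$-saturated open neighborhood of $Z$ on which, orbit by orbit, $\omega$ is exact (by \cite[Lem. 8.5]{PPT} and the groupoid Morse lemma, as recalled in the excerpt), and fix a $G$-saturated open $U'$ with $Z\subset U'$ and $\overline{U'}\subset U$. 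The first step is to choose a basic function $\chi\in\Omega^0_{\mathrm{bas}}(G)$ with $\chi\equiv 1$ outside $U$ and $\chi\equiv 0$ on $U'$; such basic bump functions exist because one can average an ordinary bump function over the isotropy using the Haar system of the proper groupoid, or equivalently because $X$ is a paracompact (triangulable) orbifold. Then for a generic small basic closed $1$-form $\eta$ on $M$ representing $0\in H^1_{\mathrm{bas}}(G)=H^1(X)$ — concretely $\eta = df$ for a small basic $f$ — set $\omega' \ce \omega + \chi\,\eta$. By construction $\omega'=\omega$ on $U'$, so $\omega'$ is still of Morse type with exactly the same zero set and index data, and $[\omega']=[\omega]=\xi$ because $\omega'-\omega=\chi\,df$ has zero periods: $\int_\sigma(\omega'-\omega)=\int_\sigma \chi\,df$ and for a $G$-loop this vanishes since $\chi\,df$ is exact away from $U$ and the correction is supported where we may arrange it to be exact, or more cleanly one first shrinks so that $\chi\,df = d(\chi f) - f\,d\chi$ and replaces $\eta$ by the closed form obtained after a further cutoff. (A cleaner alternative: take $\eta$ itself closed and with zero periods and supported away from $U'$ — any small exact basic $1$-form works — and then $[\omega']=[\omega]$ is immediate.)

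The second and essential step is to show that for a suitably generic choice of the perturbation, each singular leaf of $\tilde{\mathcal F}_{\omega'}$ contains exactly one zero of $\overline{\omega'}$. Two distinct zeros $[x_i]$, $[x_j]$ of $\overline{\omega}$ lie on the same singular leaf precisely when $\int_\sigma\overline{\omega}=0$ for some smooth $G$-path $\sigma$ from a point of $\mathcal O_{x_i}$ to a point of $\mathcal O_{x_j}$ with $\omega(\dot\sigma_k)\equiv 0$ — equivalently, when the value $c_i$ taken by a local primitive at $\mathcal O_{x_i}$ is connected to the value $c_j$ at $\mathcal O_{x_j}$ through the basic foliation $\mathcal F_\omega$. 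I would argue that the set of perturbations $\chi\,\eta$ for which some pair of zeros remains on a common singular leaf is nowhere dense: moving $\omega$ by $\chi\eta$ on the region between the (finitely many) singular leaf components shifts the relative periods $\int_\sigma\overline{\omega}$ continuously and, generically, off the value $0$. More precisely, because there are only finitely many zeros and, by the structure of $\mathcal F_\omega$ described in the excerpt and in \cite{OV,V}, only finitely many singular-leaf-components emanating from each, the "bad locus" is a finite union of positive-codimension conditions $\{\int_{\sigma_{ij}}\overline{\omega'}=0\}$ on the finite-dimensional (or at least, after Sard, effectively finite-dimensional) space of allowed perturbations; a generic arbitrarily small $\eta$ avoids all of them. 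This is the step I expect to be the main obstacle, because one must check that the relevant period functional genuinely depends on the perturbation (i.e. is not identically zero for reasons of homology) and that finitely many $G$-paths suffice to detect coincidences of singular leaves — both of which follow from the local description of $\tilde{\mathcal F}_\omega$ near the zeros and from compactness of $X$, but require care.

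Finally, having produced $\omega'$ on $M$ with the desired properties, I pass to $X$: set $\overline{\omega'}$ to be the Morita class of $\omega'$, which is legitimate since closed basic $1$-forms of Morse type are Morita invariant by \cite[Prop. 3.8]{V}. It is generic by the construction above, represents $\xi$ because $[\omega']=[\omega]$ in $H^1_{\mathrm{bas}}(G)\cong H^1(X)$, and agrees with $\overline{\omega}$ on the open set $U/G$ (or any open neighborhood of the zeros contained in $U'/G$) because $\omega'=\omega$ there. The smallness of the perturbation is controlled by the norm of $\eta$ (equivalently of $df$), which we took as small as we like. This completes the plan.
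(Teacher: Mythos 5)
Your overall strategy is the same as the paper's: perturb $\omega$ by exact basic $1$-forms that vanish near the zeros, so that the zero set, the Morse-type condition, the cohomology class, and the restriction to a neighborhood $U$ of the zeros are all untouched, and then argue that a suitable small choice of perturbation separates the zeros onto distinct singular leaves. (A small wobble in your first formulation: $\chi\,\eta$ with $\chi$ a cutoff and $\eta$ closed is in general not closed, so only your ``cleaner alternative'' --- perturbing by small exact basic forms, as the paper does with $\sum_j a_j\,df_j$ where $f_j\equiv 1$ on a neighborhood of $[x_j]$ and the supports of the $f_j$ are pairwise disjoint --- actually works.)

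The genuine gap is exactly the step you flag as the main obstacle. Two zeros $[x_i]$, $[x_j]$ lie on a common leaf of $\tilde{\mathcal F}_{\omega'}$ only if \emph{some} smooth $G$-path $\sigma$ from $x_j$ to $x_i$ satisfies $\int_\sigma\omega'=0$, and there are infinitely many homotopy classes of such paths, so the bad locus is not ``a finite union of positive-codimension conditions'' and no Sard-type or genericity-of-transversality argument applies directly. The paper closes this with the one observation your plan is missing: for a fixed reference $G$-path $\sigma_{ij}$ from $x_j$ to $x_i$, any other $G$-path $\sigma$ between the same points satisfies $\int_{\sigma}\omega'-\int_{\sigma_{ij}}\omega'\in\textnormal{Per}_\xi(\Pi_1^{\textnormal{orb}}(X))$, because $\sigma^{-1}\ast\sigma_{ij}$ is a $G$-loop and $[\omega']=\xi$. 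Hence the set of all possible values of $\int_\sigma\omega'$ is the single coset $\int_{\sigma_{ij}}\omega+a_i-a_j+\textnormal{Per}_\xi(\Pi_1^{\textnormal{orb}}(X))$ of the countable (indeed finitely generated) subgroup of periods. It therefore suffices to choose the arbitrarily small constants $a_j$ so that, for each of the finitely many pairs $i\neq j$, the number $\int_{\sigma_{ij}}\omega+a_i-a_j$ avoids $\textnormal{Per}_\xi(\Pi_1^{\textnormal{orb}}(X))$; this excludes only countably many values of each $a_i-a_j$ and so is always possible. This replaces your unproved ``the period functional genuinely depends on the perturbation'' and ``finitely many paths suffice'' claims with finitely many explicit countable-avoidance conditions, and it is what actually completes the proof.
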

\begin{proof}
Recall that we have a finite amount of nondegenerate zeros $[x_1],\cdots,[x_k]$ of $\overline{\omega}$ in $X$, as it is compact. Since $G\rr M$ is proper and étale, for each $j=1,\cdots,k$ we can construct a smooth basic function $f_j:M\to \mathbb{R}$ such that $f_j|_{\pi^{-1}(U_j)}\equiv 1$ for an open neighborhood $U_j$ of $[x_j]$ in $X$ and $\textnormal{supp}(f_j)\cap \textnormal{supp}(f_i)=\emptyset$ for $i\neq j$, see \cite[s. 3.1]{CM}. Here $\pi:M\to X$ denotes the canonical orbit projection. Let $\sigma_{ij}$ be a smooth $G$-path in $M$ from $x_j$ to $x_i$ and set
$$\omega'=\omega+\sum_{j=1}^{k}a_jdf_j,$$
where all the $a_j\in \mathbb{R}$ are small enough and such that for any $i\ne j$ the number $\int_{\sigma_{ij}}\omega +a_i-a_j$ does not belong to the subgroup of periods $\textnormal{Per}_\xi(\Pi_1^\textnormal{orb}(X)) \subset \mathbb{R}$, compare \cite[s. 3.1]{V}. It is clear that $\omega'$ is closed, basic, and belongs to the same basic cohomology class of $\omega$. Furthermore, the critical orbits of $\omega'$ are given by $\mathcal{O}_{x_1},\cdots, \mathcal{O}_{x_k}$ and for $i\neq j$ the points $[x_j]$ and $[x_j]$ lie on different leaves of the singular foliation associated with $\overline{\omega'}$. That is, it follows that $\overline{\omega'}$ is generic since the $G$-path integral $\int_{\sigma}\omega'\neq 0$ for any smooth $G$-path $\sigma$ from $x_j$ to $x_i$. Indeed,

\begin{eqnarray*}
\textnormal{Per}_\xi(\Pi_1^\textnormal{orb}(X))\ni\int_{\sigma^{-1}\ast \sigma_{ij}}\omega'&=&-\int_{\sigma}\omega'+\int_{\sigma_{ij}}\omega+\sum_{j=1}^{k}a_j(f_j(\sigma_{ij,n}(1))-f_j(\sigma_{ij,0}(0)))\\
&=& -\int_{\sigma}\omega'+\int_{\sigma_{ij}}\omega+a_i-a_j,
\end{eqnarray*}
which forces $\int_{\sigma}\omega'$ to be nonzero. Besides, $\overline{\omega}|_{U}=\overline{\omega'}|_{U}$ where $U=\bigcup U_j$, or, equivalently $\omega|_{\pi^{-1}(U)}=\omega'|_{\pi^{-1}(U)}$. 
\end{proof}

Additionally:

\begin{lemma}
Let $\overline{\omega}$ be a closed basic 1-form of Morse type on $X$ such that its cohomology class $[\omega]\in  H^1(X)$ is a multiple of an integral class. Then, all the leaves of the 
singular foliation $\tilde{\mathcal{F}}_\omega$ of $X$ are compact.
\end{lemma}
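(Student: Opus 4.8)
The plan is to realize $\overline{\omega}$ as the differential of a circle‑valued function $\overline{f}$ on $X$, to observe that the leaves of $\tilde{\mathcal{F}}_\omega$ are contained in the fibres of $\overline{f}$, and then to prove that each leaf is in fact a \emph{closed} subset of $X$; compactness is then immediate since $X$ is compact. First I would record that the group of periods $P:=\textnormal{Per}_\xi(\Pi_1^{\textnormal{orb}}(X))\subset(\mathbb{R},+)$ is discrete, where $\xi=[\omega]$. By hypothesis $\xi=\lambda\,\xi_0$ for some $\lambda\in\mathbb{R}$ and some integral class $\xi_0\in H^1(X)$; since $\textnormal{Per}_\xi$ factors through the orbifold Hurewicz homomorphism $h$ (cf.\ \cite{V} and the proof of Lemma \ref{Lemma:localReebStability}) and the period homomorphism of an integral class takes only integer values, one gets $P\subseteq\lambda\mathbb{Z}$, so $P$ is discrete. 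Fixing a base point $x_0\in M$, define $\overline{f}\colon X\to\mathbb{R}/\lambda\mathbb{Z}$ by $\overline{f}([x])=\int_\tau\overline{\omega}\ (\mathrm{mod}\ \lambda\mathbb{Z})$, where $\tau$ is any smooth $G$-path in $M$ from $x_0$ to $x$ (it exists by $G$-connectedness). This is well defined and independent of the representative of $[x]$, since two choices of $\tau$ differ by concatenation with a $G$-loop at $x_0$ and the $G$-path integral is $G$-homotopy invariant and vanishes on constant paths and on arrows. On any connected open $U\subset M$ with $\omega|_U=df_U$ (such $U$ exist around every orbit by \cite[Lem.\ 8.5]{PPT}) one has $\overline{f}=(\textnormal{const})+f_U\ (\mathrm{mod}\ \lambda\mathbb{Z})$ on $[U/G_U]$, so $\overline{f}$ is continuous and $\overline{\omega}$ is its differential. (If $P=\{0\}$, replace $\mathbb{R}/\lambda\mathbb{Z}$ by $\mathbb{R}$ everywhere; then $\omega=df$ globally and the rest of the argument is unchanged.)

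If $[x]$ and $[y]$ lie on the same leaf $\overline{\mathcal{L}}$ of $\tilde{\mathcal{F}}_\omega$, there is a smooth $G$-path $\sigma$ joining representatives with $\omega(\dot\sigma_k)\equiv0$, hence $\int_\sigma\overline{\omega}=0$ and $\overline{f}([x])=\overline{f}([y])$; thus every leaf is contained in a single fibre $\overline{f}^{-1}(c)$, and each fibre is a closed — hence compact — subset of $X$. It therefore suffices to show every leaf $\overline{\mathcal{L}}$ is closed in $X$, and for this the key local fact is: every $[y]\in X$ has a neighbourhood $V_{[y]}$ with $\overline{f}^{-1}(\overline{f}([y]))\cap V_{[y]}$ contained in the leaf through $[y]$. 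When $\overline{\omega}([y])\neq0$, this is the picture of Remark \ref{RemakrStructure}: near $[y]$ the fibre $\overline{f}^{-1}(\overline{f}([y]))$ is the sub-orbifold $[f_U^{-1}(c)/F_U^{-1}(c)]$, which is connected for a small enough $V_{[y]}$ and, by Lemma \ref{Lemma1} applied inside $U$, lies in a single leaf. When $[y]$ is a zero of $\overline{\omega}$, use the Morse normal form of Remark \ref{RemakrStructure}: in a $G_y$-invariant chart centred at $[y]$ one has $\overline{\omega}=d(\tfrac12\overline{Q})$ with $\overline{Q}=-\sum_{j\le\lambda_y}y_j^2+\sum_{j>\lambda_y}y_j^2$, and, shrinking $V_{[y]}$ so that $|\tfrac12\overline{Q}|<\lambda$ on it, the local fibre through $[y]$ is $\{\overline{Q}=0\}$; for any point $[p]$ on it the radial $G$-path $\tau\mapsto\tau p$ satisfies $\overline{\omega}\bigl(\tfrac{d}{d\tau}(\tau p)\bigr)=\tau\,\overline{Q}(p)=0$, so $[p]$ lies on the singular leaf through $[y]$.

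Granting the local claim, suppose $[y]\in\overline{(\overline{\mathcal{L}})}$ for a leaf $\overline{\mathcal{L}}$. Since $\overline{\mathcal{L}}\subseteq\overline{f}^{-1}(c_0)$ for some $c_0$ and fibres are closed, $\overline{f}([y])=c_0$; choosing $[z]\in\overline{\mathcal{L}}\cap V_{[y]}\neq\emptyset$ we get $[z]\in\overline{f}^{-1}(c_0)\cap V_{[y]}$, which lies in the leaf through $[y]$, and since distinct leaves are disjoint this leaf must be $\overline{\mathcal{L}}$, so $[y]\in\overline{\mathcal{L}}$. Hence $\overline{\mathcal{L}}$ is closed in $X$, and being a closed subset of the compact space $X$ it is compact, which is the assertion. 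The only substantive point is the local claim: away from the zeros it follows at once from the orbifold submersion picture of Remark \ref{RemakrStructure} together with Lemma \ref{Lemma1}, while at a zero it rests on the groupoid Morse lemma normal form of Remark \ref{RemakrStructure} and the elementary radial-path computation above; the discreteness of $P$ and the construction of $\overline{f}$ are routine consequences of the material in \cite{V}.
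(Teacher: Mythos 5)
Your proposal is correct and follows essentially the same route as the paper: the paper invokes \cite[Prop.~3.4]{V} to produce a basic circle-valued function $f$ with $\omega=af^{\ast}(d\theta)$ and then identifies the leaves with connected components of the compact fibres, which is exactly your construction of $\overline{f}$ from the discreteness of the period group together with your local claim. The only difference is one of detail: you reprove the existence of the circle-valued primitive by hand and carefully justify, via Remark \ref{RemakrStructure} and the radial-path computation at the zeros, the step that the paper states without proof, namely that each leaf is open and closed in its fibre.
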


\begin{proof}
By \cite[Prop. 3.4]{V} it follows that there exists a basic smooth function $f:M\to S^1$ such that $\omega=af^{\ast}(d\theta)$, where $a\in \mathbb{R}$ is a constant and $d\theta$ is the angle form on $S^1$. Therefore, the leaves of $\mathcal{F}_\omega$ are the connected components of the inverse images of points $f^{-1}(z)$ with $z\in S^1$. Consequently, the induced leaves in $X$ are connected components of the inverse images of points $\overline{f}^{-1}(z)$ with $z\in S^1$ where $\overline{f}:X\to S^1$ is the induced continuous map. That is to say, the leaves of the singular foliation $\tilde{\mathcal{F}}_\omega$ in $X$ are compact. 
\end{proof}

Let us suppose that $\overline{\omega}$ generates a singular foliation $\tilde{\mathcal{F}}_\omega$ of $X$ with all leaves compact. We want to associate to $\overline{\omega}$ an oriented connected graph $\overline{\Gamma}_\omega$ in the following way. We say that two points in $X$ are \emph{equivalent} if they belong to the same leaf of the singular foliation $\tilde{\mathcal{F}}_\omega$.

\begin{lemma}\label{w-graph}
The singular leaves are isolated (i.e. each singular leaf has
a neighborhood, free of other singular leaves) and the quotient space $\overline{\Gamma}_\omega=X/\sim$ is a graph.
\end{lemma}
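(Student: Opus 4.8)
The plan is to mimic the classical argument for manifolds (as in Farber's book or the foliation literature) adapted to the orbifold/groupoid language already set up in the excerpt. The statement has two parts: first, that singular leaves are isolated, and second, that the quotient $\overline{\Gamma}_\omega = X/\sim$ is a graph. I would treat the isolation claim first, since it is what makes the quotient near singular points look like a wedge of intervals (hence a vertex of the graph).

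\textbf{Isolation of singular leaves.} First I would recall that $\overline{\omega}$ has only finitely many zeros $[x_1],\dots,[x_k]$, so there are only finitely many singular leaves. Fix a singular leaf $\overline{\mathcal{L}}$; it is compact by hypothesis. The idea is to exhibit a saturated neighborhood of $\overline{\mathcal{L}}$ containing no other singular leaf. Near each zero $[x_j]\in\overline{\mathcal{L}}$ the local model from Remark \ref{RemakrStructure} writes $\overline{\omega}$ as $-\sum_{j=1}^{\lambda_x}y_jdy_j+\sum_{j=\lambda_x+1}^{\dim X}y_jdy_j$, i.e. $\overline{\omega}=\overline{dQ}$ for the quadratic $Q$, and in such a chart the only zero is $[x_j]$ itself. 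Away from the zeros, the singular leaf $\overline{\mathcal{L}}$ minus its zeros is a finite union of (open pieces of) regular leaves; each compact regular leaf component, or rather a suitable compact collar of the regular part, admits by Lemma \ref{Lemma:localReebStability} a neighborhood $W$ on which $\tilde{\mathcal{F}}_\omega$ is given by level sets of a function $\overline{f}_W$, and such level sets contain no zeros of $\overline{\omega}$. Patching these local models (finitely many chart-neighborhoods of the zeros in $\overline{\mathcal{L}}$, finitely many Reeb-type neighborhoods covering the compact regular part of $\overline{\mathcal{L}}$), one obtains a neighborhood of $\overline{\mathcal{L}}$ all of whose singular points are the zeros already lying on $\overline{\mathcal{L}}$; shrinking so that it meets no other $\overline{\mathcal{L}}'$ (possible since there are finitely many singular leaves and $X$ is Hausdorff) gives the desired isolating neighborhood. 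This shows the singular leaves are isolated.

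\textbf{The quotient is a graph.} Now I would show $\overline{\Gamma}_\omega = X/\sim$ is a (finite) topological graph, i.e. a 1-dimensional CW-complex. The vertices will be the images of the singular leaves — there are finitely many. Away from the singular leaves, on $X\setminus(\text{singular leaves})$ all leaves are regular and compact, and by Lemma \ref{Lemma:localReebStability} each regular leaf has a neighborhood $W\cong \overline{\mathcal{L}}\times I$ on which the foliation is the product foliation; hence its image in the quotient is an open interval, and the quotient of $X\setminus(\text{singular leaves})$ is a 1-manifold (a disjoint union of open arcs). It remains to understand the quotient near a singular leaf $\overline{\mathcal{L}}$: using the isolating neighborhood built above together with the local quadratic model at each zero and the product model along the regular part, one checks that a punctured saturated neighborhood of $\overline{\mathcal{L}}$ has finitely many "ends" — one for each singular leaf component adjacent to $\overline{\mathcal{L}}$ on each side — so the quotient of such a neighborhood is a cone on finitely many points, i.e. a star/wedge of finitely many half-open intervals glued at the vertex $[\overline{\mathcal{L}}]$. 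Gluing these local stars to the arcs coming from the regular part along their common boundary points exhibits $X/\sim$ as a finite graph; connectedness of $\overline{\Gamma}_\omega$ follows from connectedness of $X$ (equivalently $G$-connectedness of $G\rr M$), and orientation of the edges comes from the direction in which $\overline{\omega}$ increases, which is consistent along each regular arc since $\overline{\omega}$ is nonvanishing there.

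\textbf{Main obstacle.} The delicate point is the local analysis at a singular leaf: controlling how many singular leaf components abut a given singular leaf and verifying that a punctured neighborhood really does retract to finitely many arcs. One has to combine the quadratic normal form of Remark \ref{RemakrStructure} at several zeros lying on $\overline{\mathcal{L}}$ (when $\overline{\omega}$ is not generic) with the product structure of Lemma \ref{Lemma:localReebStability} along the regular part, and argue — using compactness of $\overline{\mathcal{L}}$ and a Mayer–Vietoris / finite-cover patching — that the regular part of a small saturated neighborhood of $\overline{\mathcal{L}}$ has only finitely many connected components, each of which is an $\overline{\mathcal{L}}'\times(0,\varepsilon)$-type piece. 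The orbifold singularities (the isotropy groups $G_x$ acting on the normal data) do not obstruct this, since throughout we work with the honest Hausdorff orbit space $X$ and the level-set descriptions in Remark \ref{RemakrStructure} are already phrased on $X$; but care is needed to phrase "graph" as a statement about the topological space $X/\sim$ rather than any stacky enhancement.
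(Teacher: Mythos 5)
Your argument is correct and follows essentially the same route as the paper, whose proof simply defers to \cite{FKL} and \cite{Honda} combined with the local models of Remark \ref{RemakrStructure} and Lemma \ref{Lemma:localReebStability} (quadratic normal forms at the zeros, local Reeb stability along the compact regular part, and finiteness plus compactness to patch and to separate the finitely many singular leaves). The only substantive content of the paper's proof that you do not reproduce is the classification of which zeros give genuine branch vertices --- in particular the orbifold-specific phenomenon that even a zero of index $1$ or $n-1$ may fail to disconnect its leaf when the isotropy $G_x$ acts nontrivially on $\nu_-(\mathcal{O}_x)_x$ (compare Figures \ref{Fig:FoliationinR3} and \ref{Fig:FoliationinR3Z2}) --- but that refinement is only needed downstream (e.g.\ for the trivalence of $\overline{\Gamma}_\omega$ when $\overline{\omega}$ is generic), not for the bare statement that $X/\sim$ is a graph.
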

\begin{proof}
This follows by adapting the arguments in \cite[s. 5.2]{FKL} or \cite[step. (1)]{Honda} to the context described in Remark \ref{RemakrStructure} together with Lemmas \ref{Lemma:localReebStability} and \ref{lem:generic}. All zeros of index $j$ with $1<j<\dim X-1$ do not give rise to true vertices of the graph $\overline{\Gamma}_\omega$, since the surgeries corresponding to passing such zeros do not change the connectivity of the leaves. Nevertheless, unlike the manifold case, in our context there might be zeros of index $1$ of $\dim X-1$ for which surgeries corresponding to passing them do not change the connectivity of the leaves, so that those zeros do not give rise to true vertices of the graph $\overline{\Gamma}_\omega$ either, compare Figures \ref{Fig:FoliationinR3} and \ref{Fig:FoliationinR3Z2}. Hence, the vertices of $\overline{\Gamma}_\omega$ are determined by some of the zeros of index $1$ of $\dim X-1$ and by all the zeros of index $0$ and $\dim X$ which always produce terminal univalent vertices.
\end{proof}

In particular, if $\overline{\omega}$ is generic then $\overline{\Gamma}_\omega$ becomes a trivalent\footnote{Each vertex has exactly 3 edges.} directed graph. We refer the reader to \cite[step. (1)]{Honda} where the main features of these graphs are well-described and explored.

	\begin{remark}
		Let us consider the simple situation in which $X$ has the local quotient type $\mathbb{R}^3/G$, where $G$ is either $\{\textnormal{id}\}$ or else $\mathbb{Z}_2$, and the action of $\mathbb{Z}_2$ on $\mathbb{R}^3$ is given by reflecting along the $z$-coordinate. Assume that $\omega$ is locally given by $df$ with $f=x^2+y^2-z^2$, so that the leaves of the singular foliation are  parametrized by $f=t$, thus having a singular leaf when $t=0$. On the one hand, if $G=\{\textnormal{id}\}$ then the level sets have two $(G\ltimes \mathbb{R}^3)$-connected components if $t<0$ and one $(G\ltimes \mathbb{R}^3)$-connected component if $t>0$, see Figure \ref{Fig:FoliationinR3}. On the other hand, if $G=\mathbb{Z}_2$ then the level sets have always a single $(G\ltimes \mathbb{R}^3)$-connected component, see Figure \ref{Fig:FoliationinR3Z2}. Hence, the local picture for the graph associated with the singular foliation for the case $G=\{\textnormal{id}\}$ is the component defined by three edges with four vertices, while for $G=\mathbb{Z}_2$ it is the component defined by one edge with two vertices.
		\begin{figure}[H]
			\centering
			\includegraphics[width=0.3\textwidth]{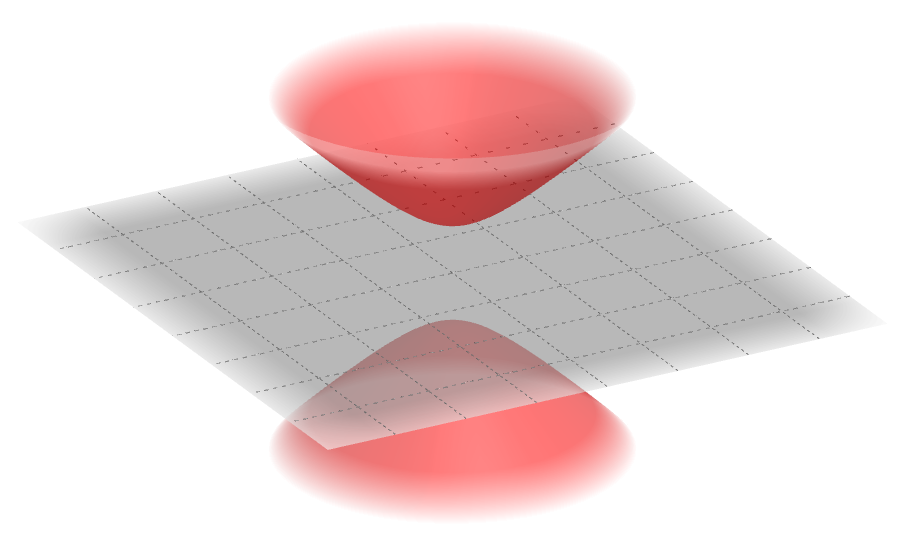}
			\includegraphics[width=0.3\textwidth]{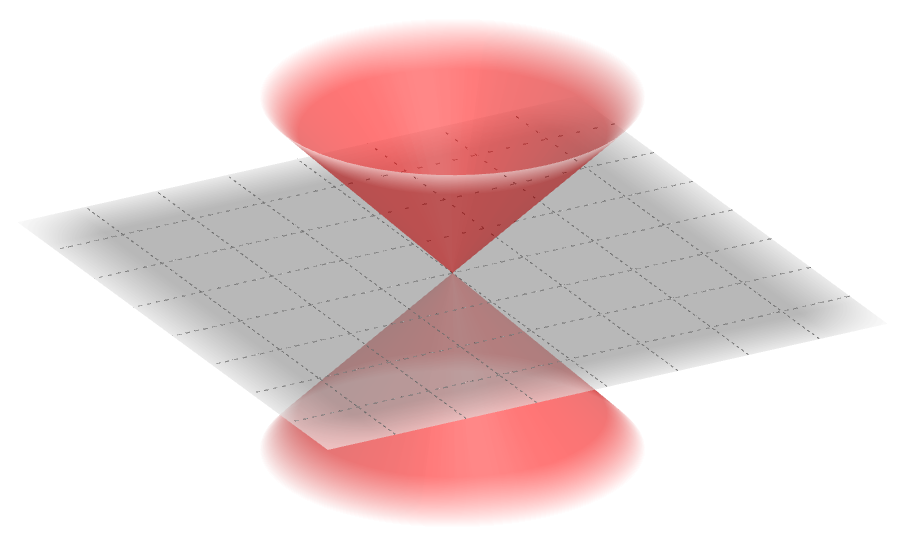}
			\includegraphics[width=0.3\textwidth]{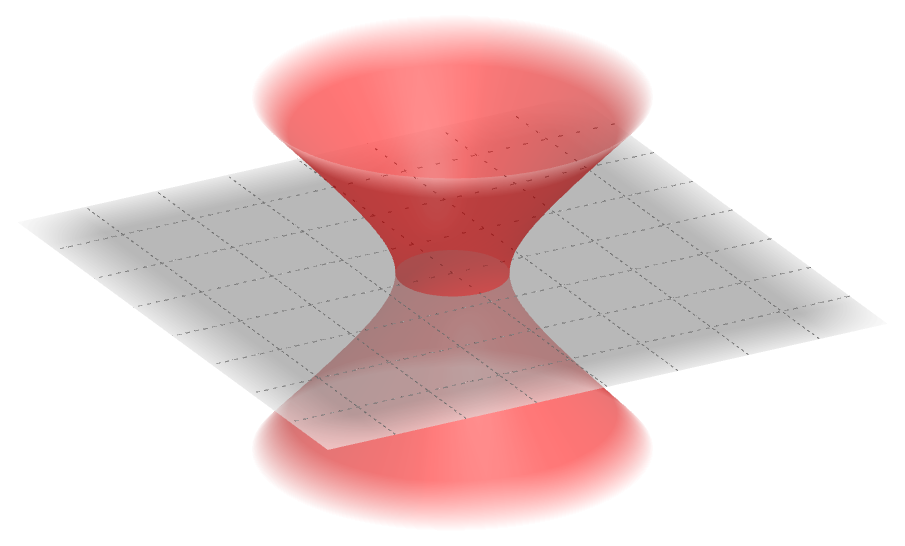}
			\caption{\footnotesize Singular foliation in $\mathbb{R}^3$ defined by $x^2+y^2-z^2=t$ at $t=-2, 0, 2$, respectively.}
			\label{Fig:FoliationinR3}
		\end{figure}
		\begin{figure}[H]
			\centering
			\includegraphics[width=0.3\textwidth]{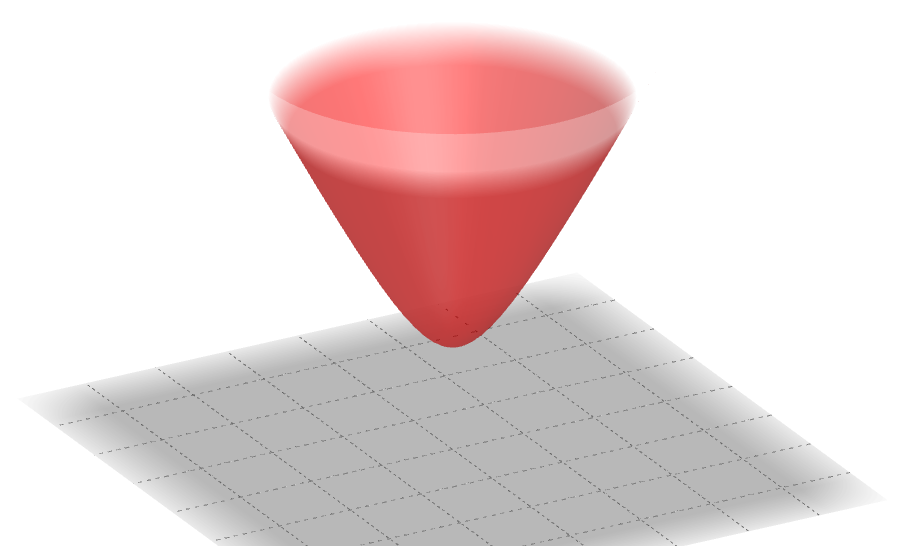}
			\includegraphics[width=0.3\textwidth]{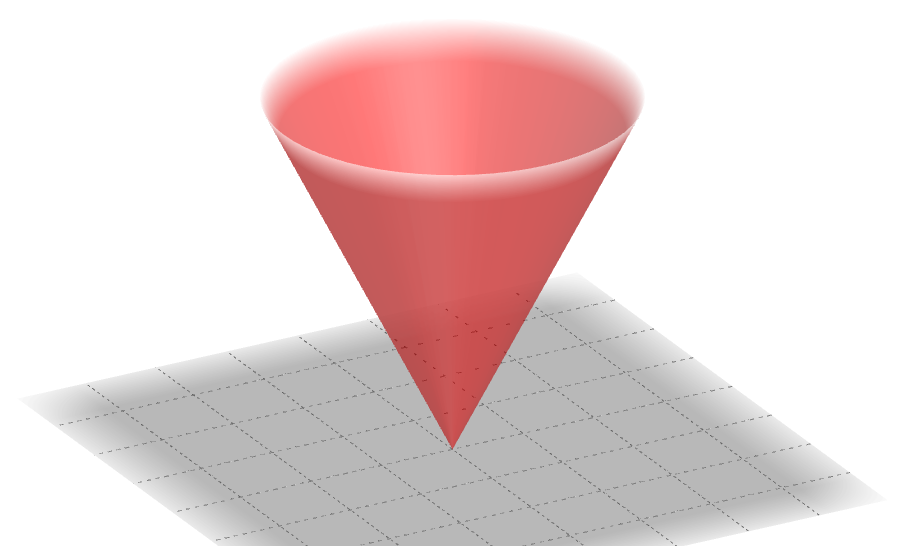}
			\includegraphics[width=0.3\textwidth]{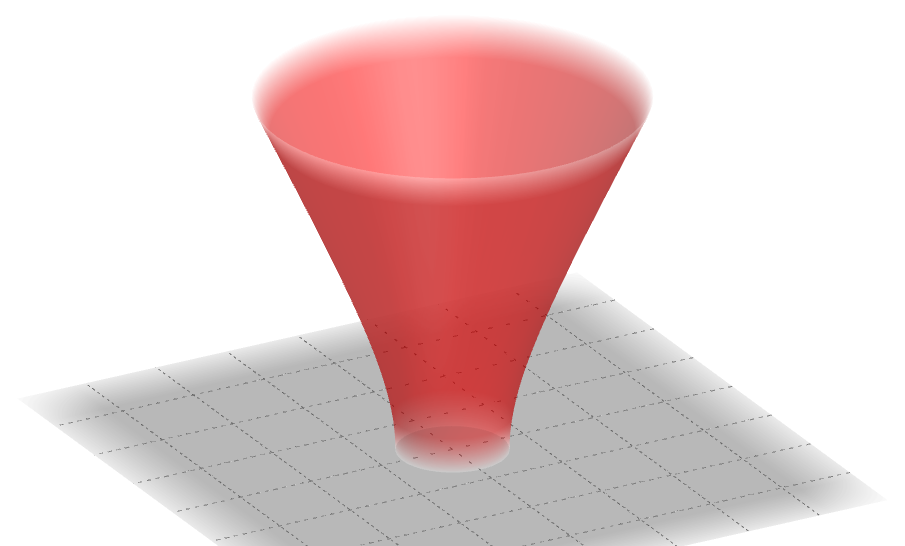}
			\caption{\footnotesize Singular foliation in $\mathbb{R}^3/\mathbb{Z}_2$ defined by $x^2+y^2-z^2=t$ at $t=-2, 0, 2$, respectively.}
			\label{Fig:FoliationinR3Z2}
		\end{figure}
	\end{remark}

We are now in position to start generalizing the main results in \cite{FKL} concerning the compactness and non-compactness of the leaves of the singular foliation $\tilde{\mathcal{F}}_\omega$ in $X$ (see Theorem \ref{thm1}). This can be addressed by following the strategy described in \cite[s. 8]{FKL} step by step, yet using the terminology and results developed in \cite{Hep,V} (see also \cite{OV}) for the case of orbifolds. Although the ideas of the proofs are natural and straightforward adaptations of the classical ones, we bring enough details in order to use most of the machinery previously introduced. 

\begin{proof}[Proof of item (1) in Theorem \ref{thm1}]


 Let $\omega$ be a closed basic 1-form of Morse type representing $\xi$ and such that all the induced leaves $\overline{\mathcal{L}}$ of the singular foliation $\tilde{\mathcal{F}}_\omega$ in $X$ are compact. We can consider the graph $\overline{\Gamma}_\omega$ from  Lemma \ref{w-graph}. It is possible to assign positive weights to the edges of $\overline{\Gamma}_\omega$ as follows. Let $\pi_{\Gamma}:X\to \overline{\Gamma}_\omega$ be the canonical projection. If $\overline{\sigma}$ is an oriented edge of $\overline{\Gamma}_\omega$ from $p$ to $q$ then its \emph{weight} is defined to be the value of the $G$-path integral $\int_{\sigma}\omega$ of any $G$-path from $\psi^{-1}(p)$ to $\psi^{-1}(q)$ sitting inside $\psi^{-1}(\overline{\sigma})$. Here $\psi:= \pi_{\Gamma}\circ \pi$ stands for the composition $M\to X\to \overline{\Gamma}_\omega$. Thus, we get a function $W:\textnormal{edges}(\overline{\Gamma}_\omega)\to \mathbb{R}^+$ by assigning to each oriented edge its weight. Firstly, $\psi:M\to \overline{\Gamma}_\omega$ is a basic function since the whole groupoid orbit of a point in $M$ is inside the leaf of $\mathcal{F}_\omega$ passing through such a point, thus inducing a Lie groupoid morphism from $G\rr M$ to the unit Lie groupoid $\overline{\Gamma}_\omega\rr \overline{\Gamma}_\omega$. Secondly, $W$ is a 1-cocycle for the singular cohomology of $\overline{\Gamma}_\omega$, so that we get a cohomology class $\xi'=[W]\in H^1(\overline{\Gamma}_\omega,\mathbb{R})$ which satisfies $\psi^\ast (\xi')=\xi$ by construction. Therefore, with respect to any base-point $x_0\in M$, the homomorphism of periods $\textnormal{Per}_\xi:\Pi_1^{\textnormal{orb}}(X,[x_0])\to (\mathbb{R},+)$ can be factorized as 
$$\Pi_1^{\textnormal{orb}}(X,[x_0])\xrightarrow[]{\it \psi_\ast} \Pi_1(\overline{\Gamma}_\omega, \psi(x_0)) \xrightarrow[]{\it \textnormal{Per}_{\xi'}} (\mathbb{R},+).$$

\noindent Hence, $\Pi_1(\overline{\Gamma}_\omega, \psi(x_0))$ is the free group we were looking for.

Conversely, let $\xi\in H^1(X)$ be a cohomology class such that its homomorphism of periods can be factorized as $\Pi_1^{\textnormal{orb}}(X)\to F_r\to (\mathbb{R},+)$ through a free group $F_r$ of rank $r$. It follows that using a $G$-invariant version of the Thom--Pontrjagin construction (see \cite{FKL}), we may find a closed basic 1-form $\omega$ of Morse type on $M$ which presents the class $\xi$ and such that all the leaves of the singular foliation $\tilde{\mathcal{F}}_\omega$ of $X$ are compact. Let $W_r=\bigvee_{j=1}^r S^1_j$ denote a bouquet of $r$ circles. By averaging with respect to a proper Haar measure system on $G$ (see, e.g. \cite{CM}), it follows that there exist a cohomology class $\xi'\in H^1(W_r,\mathbb{R})$ and a continuous basic function $\lambda:M\to W_r$ such that $\lambda^\ast(\xi')=\xi$. Let us suppose that every circle $S_j^1$ has a fixed orientation, denote by $p_j=\langle \xi', [S^1_j]\rangle \in \mathbb{R}$, and pick points $a_j\in S^1_j$ for all $j=1,\cdots, r$. By performing a small perturbation if needed, we may assume that $\lambda$ is smooth near each fiber $\lambda^{-1}(a_j)$ and that each $a_j$ is a regular value. It follows that $\Lambda=s^\ast \lambda=t^\ast \lambda$ is also smooth around $\Lambda^{-1}(a_j)$ since $s$ and $t$ are local diffeomorphisms and $(G_j\rr M_j):=(\Lambda^{-1}(a_j)\rr \lambda^{-1}(a_j))$ is a Lie subgroupoid of $G\rr M$, since $\lambda$ is basic and $a_j$ is a regular value. Note that each $G_j\rr M_j$ keeps being proper and étale, so that it represents a sub-orbifold $X_j$ of $X$. We can model the orbifold normal bundle of $X_j$ as the vector bundle $\nu(X_j)$ over $X_j$ presented by the normal subgroupoid $\nu(G_j)\rr \nu(M_j)$, whose Lie groupoid structure is induced by that of the tangent groupoid $TG\rr TM$, compare \cite{dho}. The normal bundle of $\nu(M_j)$ is oriented by the fixed orientation of the circle $S^1_j$, so that $\nu(G_j)$ is also oriented, as its structural maps are fiberwise linear isomorphisms. In other words, $\nu(X_j)$ is oriented. We denote by $\overline{M}$ (resp. $\overline{G}$) the result of cutting $M$ open along the disjoint submanifolds $M_1, \cdots, M_r$ (resp. $G_1, \cdots, G_r$). The boundary $\partial \overline{M}$ (resp. $\partial \overline{G}$) is given by two copies $M_j^+$ and $M_j^-$ (resp. $G_j^+$ and $G_j^-$) for all $j=1, \cdots, r$, where the notation $M^{\pm}_j$ says that on $M^+_j$ the positive normal points inside $\overline{M}$, while on $M^-_j$ it points outside $\overline{M}$. By construction, we obtain a groupoid $\overline{G}\rr \overline{M}$ with boundary $\partial \overline{G}\rr \partial\overline{M}$ for which each copy satisfies $G^{\pm}_j\rr M^{\pm}_j$, as the structural maps of $G_j$ preserve the normal orientations. Furthermore, there is a canonical factor-morphism $\phi:(\overline{G}\rr \overline{M})\to (G\rr M)$ sending each Lie groupoid $G^{\pm}_j\rr M^{\pm}_j$ isomorphically onto $G_j\rr M_j$. These groupoid structures determine an orbifold $\overline{X}$ with boundary $\partial \overline{X}$ which consists of two copies $X^\pm_j$ for each $j=1, \cdots, r$ as well as a canonical factor map $\hat{\phi}:\overline{X}\to X$ sending each $X^\pm_j$ homeomorphically onto $X_j$. In consequence, $\hat{\phi}^\ast (\xi)=0$. 

As in the manifold case \cite[p. 128]{F}, and using the extension of Morse theory to orbifolds developed in \cite{Hep}, one can construct a Morse function $\overline{f}: \overline{X}\to \mathbb{R}$ with the following properties:
\begin{itemize}
\item the differential $d\overline{f}$ is nonzero on $X^\pm_j$ for all $j=1, \cdots, r$,
\item $\overline{f}$ assumes a constant value on each orbifold $X^\pm_1, \cdots, X^\pm_r$, and
\item $\overline{f}|_{X^-_j}-\overline{f}|_{X^+_j}=p_j$ and $(d\overline{f})|_{X^-_j}$ and $(d\overline{f})|_{X^+_j}$ match for all $j=1, \cdots, r$.
\end{itemize}

The last property implies that $d\overline{f}$ defines a closed 1-form $\overline{\omega}$ on $X$ satisfying $\hat{\phi}^\ast (\overline{\omega})=d\overline{f}$. It follows that $\overline{\omega}$ is of Morse type, has zero periods in $\overline{X}$, and has period $p_j$ along any closed $G$-path in $M$ which crosses $M_j$ once in the positive direction and does not intersect the submanifolds $M_k$ with $k\neq j$. Therefore, $\overline{\omega}$ is presented by a closed basic 1-form of Morse type $\omega$ on $M$ which in turn represents the cohomology class $\xi$. Additionally, it holds that for any leaf $\overline{\mathcal{L}}$ of the singular foliation $\tilde{\mathcal{F}}_\omega$ in $X$ either $\overline{\mathcal{L}}=X_j$ for some $j=1,\cdots, r$ or the preimage $\hat{\phi}^{-1}(\overline{\mathcal{L}})\subset \textnormal{int}(\overline{X})$ is a connected component of a level set $\overline{f}^{-1}(a)$. That is to say, all the leaves of the singular foliation $\tilde{\mathcal{F}}_\omega$ in $X$ are compact.
\end{proof}

Item (2) of Theorem \ref{thm1} explains how the compact and noncompact leaves of $\tilde{\mathcal{F}}_\omega$ co-occur in $X$ by describing a splitting of it into compact suborbifolds $X_c$ and $X_\infty$ which are roughly the union of the compact and noncompact leaves of $\tilde{\mathcal{F}}_\omega$, respectively. In order to do so, we need to introduce some additional notions. First, the \emph{rank} of a cohomology class $\xi\in H^1(X)$ is defined to be the rank of the image of its homomorphism of periods $\textnormal{Per}_\xi$, see \cite{V}. Second, if $\overline{\mathcal{L}}$ is a singular leaf of $\tilde{\mathcal{F}}_\omega$ then its \emph{singular leaf components} are defined to be the closures in $\overline{\mathcal{L}}$ of the connected components of $\overline{\mathcal{L}}\backslash  \lbrace [x]: \overline{\omega}([x])=0\rbrace$. Of course, we can similarly define the singular leaf components of $\mathcal{L}$ in $M$ by looking at the closure of its connected components of $\mathcal{L}\backslash  \lbrace \mathcal{O}_x: \omega(x)=0\rbrace$.

%

\begin{proof}[Proof of item (2) in Theorem \ref{thm1}]
Suppose that $\overline{\omega}$ is presented by a closed basic $1$-form of Morse type $\omega$ on $M$. We denote by $U$ the union of all the compact leaves of $\tilde{\mathcal{F}}_\omega$, including the singular ones. Let us check that $U$ is open. If $\overline{\mathcal{L}}$ is a compact leaf then there are open sets $W$ and $V$ in $X$ such that $\overline{\mathcal{L}}\subset \textnormal{cl}(W)\subset V$ and $\overline{\omega}|_{V}=d\overline{f}$, where $\overline{f}:V\to \mathbb{R}$ is the induced function associated with a smooth basic function $f:\pi^{-1}(V)\to \mathbb{R}$. We may actually assume that $V$ is built so that any $G$-loop $\sigma$ in $\pi^{-1}(V)$ is homotopic to a $G$-loop in $\mathcal{L}$ and hence $\int_\sigma \overline{\omega}=0$, thus obtaining that $\omega|_{\pi^{-1}(V)}$ 
is the differential of a basic smooth function, compare \cite[Prop. 3.3]{V}. For sake of simplicity, one may also assume that $\overline{f}(\overline{\mathcal{L}})=0$ and that $V$ contains no zeros of $\overline{\omega}$ except possibly for those in $\overline{\mathcal{L}}$. Working locally around every point of $\overline{\mathcal{L}}$ and using the fact that  it is compact one can guarantee the existence of a certain $\epsilon>0$ such that $\overline{f}^{-1}(a)\subset W$ for all $a\in (-\epsilon,\epsilon)$. Since $f$ is basic, we actually have that $f^{-1}(a)$ is a saturated submanifold of $\pi^{-1}(V)$ lying in $\pi^{-1}(W)\subset \pi^{-1}(\textnormal{cl}(W))\subset \pi^{-1}(V)$, thus deducing that $\overline{f}^{-1}(a)$ is a compact suborbifold for all $a\in (-\epsilon,\epsilon)$. This shows that an open neighborhood of $\overline{\mathcal{L}}$ is the union of compact leaves of the singular foliation $\tilde{\mathcal{F}}_\omega$, so that $U$ is open. Hence, we define $X_c=\textnormal{cl}(U)$ and denote by $G_c\rr M_c$ the proper étale Lie groupoid presenting it. Let $Y\subset X$ denote the union of all compact leaves and all compact leaf components of noncompact singular leaves. As consequence of Remark \ref{RemakrStructure}, one has that $X_0=X\backslash \lbrace [x]: \overline{\omega}([x])=0\rbrace$ admits a decomposition by codimension 1 compact suborbifolds. Therefore, since $X$ is compact the orbifold version of Haefliger's result \cite[Thm. 3.2]{Haefliger} implies that the union $Y_0$ of all closed leaves in $X_0$ is closed. That is, $Y$ is closed. Hence, as $U$ is contained in $Y$ we have that $X_c\subset Y$ and its boundary $\partial X_c=X_c\backslash U$ is the union of finitely many compact leaf components of noncompact singular leaves. 

Let us now suppose that  $X_\infty\neq \emptyset$, so that there is a nonclosed leaf $\overline{\mathcal{L}}\subset \textnormal{int}(X_\infty)$ and a point $[y]\in \textnormal{cl}(\overline{\mathcal{L}})\backslash \overline{\mathcal{L}}$. Without loss of generality one may assume that $[y]$ is not a zero of $\overline{\omega}$. We know that there exist an open subset $\mathcal{O}_y\subset U\subset M$ and a basic smooth function $f_U:U\to \mathbb{R}$ such that $\omega|_U=df_U$. Thus, $\overline{\mathcal{L}}\cap \pi(U)$ can be viewed as the union of the set $\overline{f}_U=c_k$ for an infinite sequence $\lbrace c_k\rbrace\subset \mathbb{R}$ and $\overline{f}_U([y])\subset \textnormal{cl}(\lbrace c_k\rbrace)$. Given two sequence points $c_k$ and $c_l$ we consider a $G$-loop $\sigma_{k,l}$ in $\pi^{-1}(\textnormal{int}(X_\infty))$ starting at a point $y_l\in U$ with $f(y_l)=c_l$, traveling along the leaf $\mathcal{L}$ to a point in $y_k\in U$ with $f(y_k)=c_k$, and ending at the initial starting point $y_l\in U$. As consequence of the results shown in \cite[s. 3.1]{V}, we get that $\int_{\sigma_{k,l}}\overline{\omega}=c_l-c_k$, which ensures that one can find $G$-loops $\sigma$ in $\pi^{-1}(\textnormal{int}(X_\infty))$ whose $G$-path integral $\int_\sigma\overline{\omega}$ is as small as we want it to be. In other words, $\textnormal{rk}(\xi_\infty)>1$.

Finally, let $\overline{\omega}$ be generic. Suppose that a noncompact singular leaf 
$\overline{\mathcal{L}}$ is the union of two leaf components $C_1$ and $C_2$ such that $C_1$ is compact and $C_2$ is noncompact. If $[x]\in X$ is a zero of index $\lambda$ then by the Morse handle decomposition for orbifolds (see \cite{Hep,OV}), there is a neighborhood $W$ of $[x]$ such that $(W,W\cap \overline{\mathcal{L}})\cong$ cone over $(S^{n-1}/G_x,S^{\lambda-1}/G_x\times S^{n-\lambda-1}/G_x)$ if $0<\lambda<n-1$, while if $\lambda=1$ or $\lambda=n-1$, then $(W,W\cap \overline{\mathcal{L}})\cong$ cone over $(S^{n-1}/G_x, S^{n-2}/G_x)$ or $(W,W\cap \overline{\mathcal{L}})\cong$ cone over $(S^{n-1}/G_x, S^0/G_x\times S^{n-2}/G_x)$. It is worth saying that for this last statement to make sense we are identifying $S^{n-1}$ with the unit sphere $\partial D(\mathcal{O}_x)_x$ in $\nu_x(\mathcal{O}_x)$, as well as $S^{\lambda-1}$ and $S^{n-\lambda-1}$ with the unit spheres $\partial D_-(\mathcal{O}_x)_x$ and $\partial D_+(\mathcal{O}_x)_x$ in $\nu_-(\mathcal{O}_x)_x$ and $\nu_+(\mathcal{O}_x)_x$, respectively, see \cite[Prop. 5.8]{OV}. The intersection $C_1\cap C_2$ is a single point $[x]$ with $\omega(x)=0$ which must be of Morse index $1$ or $n-1$. Therefore, one can assume that there exists a neighborhood $W$ of $[x]$ such that $(W,W\cap \overline{\mathcal{L}})\cong$ cone over $(S^{n-1}/G_x, S^0/G_x\times S^{n-2}/G_x)$, where $W\cap C_1$ is one of the two possible cones over $ S^{n-2}/G_x$. In consequence, by mimicking the analysis of the boundary $\partial X_c$ as in the manifold case (compare \cite[p. 129]{F}), it follows that $C_1\subset X_c\cap X_\infty$, as desired.
\end{proof}

As an interesting outcome, one can get another Reeb-like stability theorem for our foliations with singularities.

\begin{corollary}\label{cor1}
Let $\overline{\omega}$ be a closed 1-form of Morse type on a compact and connected orbifold $X$ with no zeros of indices $1$ and $n-1$. Then, either 
any leaf of the singular foliation $\tilde{\mathcal{F}}_\omega$ is compact or any leaf is dense in $X$. If $\xi=[\omega]\in H^1(X)$ then the first case happens if and only if $\textnormal{rk}(\xi)\leq 1$ and the second case happens if and only if $\textnormal{rk}(\xi)>1$.
 \end{corollary}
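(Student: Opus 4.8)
The plan is to run Theorem~\ref{thm2} in the presence of the index hypothesis to obtain a clean dichotomy for the decomposition $X=X_c\cup X_\infty$, then to analyse the two resulting cases separately: in one of them the graph $\overline{\Gamma}_\omega$ of Lemma~\ref{w-graph} controls $\textnormal{Per}_\xi$, and in the other a Reeb/Sacksteder-type argument forces minimality of $\tilde{\mathcal{F}}_\omega$. The rank statement will then be pure bookkeeping, using Theorem~\ref{thm2}(4) to exclude the wrong case.

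First I would show that, when $\overline{\omega}$ has no zeros of index $1$ or $n-1$, the common boundary $\partial X_c=\partial X_\infty=X_c\cap X_\infty$ of Theorem~\ref{thm2}(3) is empty. By that theorem it is a union of compact singular leaf components of noncompact singular leaves, so it is enough to see that a noncompact singular leaf $\overline{\mathcal{L}}$ has no leaf component other than itself. A zero on $\overline{\mathcal{L}}$ of index $0$ or $n$ would be isolated in its leaf (nearby leaves being spheres), making $\overline{\mathcal{L}}$ a single compact point; hence every zero $[x]$ on $\overline{\mathcal{L}}$ has index $\lambda$ with $2\le\lambda\le n-2$, and by the Morse handle decomposition recalled in the proof of Theorem~\ref{thm2} a neighbourhood of $[x]$ meets $\overline{\mathcal{L}}$ in a cone over the connected set $S^{\lambda-1}/G_x\times S^{n-\lambda-1}/G_x$; removing $[x]$ leaves this local piece connected. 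Thus each zero of $\overline{\mathcal{L}}$ lies in the closure of exactly one connected component of $\overline{\mathcal{L}}\setminus\textnormal{zeros}(\overline{\omega})$, so those closures are pairwise disjoint, closed, and cover the connected set $\overline{\mathcal{L}}$; therefore $\overline{\mathcal{L}}$ has a unique leaf component, equal to $\overline{\mathcal{L}}$, which is noncompact. Hence $\partial X_c=\emptyset$, and since $X_c,X_\infty$ are closed with $X_c\cup X_\infty=X$ connected and $X_c\cap X_\infty=\emptyset$, exactly one of them equals $X$.

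If $X=X_c$ then $\partial X_c=X_c\setminus U=\emptyset$, where $U$ is the union of the compact leaves, so $X$ is the union of compact leaves and Lemma~\ref{w-graph} applies: $\overline{\Gamma}_\omega$ is a connected graph whose only vertices come from zeros of index $0$ and $n$, all univalent. A connected graph all of whose vertices are univalent is a single point, a closed interval, or a circle, so $\pi_1(\overline{\Gamma}_\omega)$ is trivial or $\mathbb{Z}$; factoring $\textnormal{Per}_\xi$ through it as in the proof of Theorem~\ref{thm1} gives $\textnormal{rk}(\xi)\le1$. If instead $X=X_\infty$ then Theorem~\ref{thm2}(4) gives $\textnormal{rk}(\xi)=\textnormal{rk}(\xi_\infty)>1$, and every leaf (in particular every singular leaf, which by the previous paragraph equals its unique leaf component) is noncompact. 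Conversely, $\textnormal{rk}(\xi)\le1$ excludes the case $X=X_\infty$, hence forces all leaves compact; and $\textnormal{rk}(\xi)>1$ excludes the case $X=X_c$ (which yields rank $\le1$), hence forces $X=X_\infty$. So, granted density in that last case, the first alternative holds exactly when $\textnormal{rk}(\xi)\le1$ and the second exactly when $\textnormal{rk}(\xi)>1$.

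It remains to prove that when $X=X_\infty$ every leaf is dense. Since there are finitely many singular leaves and each is noncompact, its closure contains a non-singular leaf, so it suffices to treat non-singular leaves; these carry no holonomy because $\tilde{\mathcal{F}}_\omega$ is locally defined by closed basic $1$-forms, and the only singularities of $\tilde{\mathcal{F}}_\omega$ have index in $\{2,\dots,n-2\}$, around which leaves are locally connected. A Sacksteder-type argument — adapting \cite[s. 8]{FKL} and the analysis in \cite{Honda} to the orbifold setting via the local models of Remark~\ref{RemakrStructure} and the handle decomposition of \cite{Hep,OV} — then shows that any minimal set of $\tilde{\mathcal{F}}_\omega$ that is not a compact leaf must be all of $X$; since $X=X_\infty$ has no compact leaves, $\tilde{\mathcal{F}}_\omega$ is minimal, i.e.\ every leaf is dense. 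I expect this last step to be the main obstacle: one must verify that holonomy-freeness together with the absence of compact leaves rules out proper minimal sets even when such a set contains an index-$\lambda$ zero, which requires feeding the local cone model near that zero into the transverse holonomy analysis of the regular part of the foliation.
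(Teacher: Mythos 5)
Your proposal is correct and takes essentially the same route as the paper: the index hypothesis empties the common boundary $X_c\cap X_\infty$ of Theorem~\ref{thm2}, connectedness of $X$ forces $X=X_c$ or $X=X_\infty$, the graph $\overline{\Gamma}_\omega$ of Lemma~\ref{w-graph} (an interval or a circle, since all true vertices are univalent) yields $\textnormal{rk}(\xi)\le 1$ in the compact case via Theorem~\ref{thm1}, and Theorem~\ref{thm2}(4) yields $\textnormal{rk}(\xi)>1$ otherwise. The density claim you single out as the main remaining obstacle is in fact left unaddressed by the paper's own very terse proof, which only records that all leaves are noncompact when $X_c=\emptyset$; your Sacksteder-type sketch is a reasonable plan for that step but, as you acknowledge, not yet a complete argument.
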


\begin{proof}
If $\overline{\omega}$ has no zeros of indices $1$ and $n-1$ then the boundary $X_c\cap X_\infty$ is empty, so that either $X_c=\emptyset$ (all leaves are noncompact) or else $X_\infty=\emptyset$ (all leaves are compact). We only have to prove that $\textnormal{rk}(\xi)\leq 1$ if all leaves are compact, see item (d) in Theorem \ref{thm1}. By Lemma \ref{w-graph} we obtain that $\overline{\Gamma}_\omega$ is homemorphic to a circle or a closed interval, as all of its vertices are bivalent or 
univalent in this case. Hence, the results follows by applying Theorem \ref{thm1}.
\end{proof}

\section{Intrinsically harmonic closed 1-forms}\label{sec:4}

The aim of this section is to extend to the realm of orbifolds the celebrated Calabi's theorem which characterizes intrinsically closed harmonic 1-forms of Morse type in pure topological terms \cite{Calabi}. Most of the fundamental material about the differential geometry and topology of orbifolds that we will be using throughout this section can be found in references as \cite{AMR,FarProSea,KleinerLott}, but we refer the reader to the nice survey \cite{Caramello} and its quoted references. 

Recall that a Riemannian metric on $X$ can be thought of as an equivalence class of a groupoid Riemannian metric on $G\rr M$ in the sense of del Hoyo and Fernandes \cite{dHF,dHF2}. The manifolds of arrows $G$ and objects $M$ respectively inherit Riemannian metrics $\eta^{(1)}$ and $\eta^{(0)}$ such that $s$ and $t$ become local isometries and the inversion $i$ gives rise to an isometry. Additionally, the induced bundle metrics along the normal directions to the orbits are such that the normal isotropy representations are by linear isometries. This provides us with a way of inducing ``inner products'' over the ``coarse'' tangent spaces $T_{[x]} X \approx \nu_x(\mathcal{O}_x)/G_x$ for all $x\in M$. In this scenario, one can assume that the Laplace operators of $\eta^{(1)}$ and $\eta^{(0)}$ exist and $s, t$-commute (see \cite{BBB,Watson}), so that the Laplacian operator of $\eta^{(0)}$ restricts to the spaces of basic differential forms $\Omega_{\textnormal{bas}}^\bullet(G)$, thus providing a way to determine when a closed 1-form on $X$ is harmonic. A closed 1-form $\overline{\omega}$ on $X$ is called \emph{intrinsically harmonic} if it is harmonic with respect to some Riemannian metric on $X$. Equivalently, the closed basic 1-form $\omega$ on $M$ presenting $\overline{\omega}$ is harmonic with respect to some groupoid Riemannian metric on $G\rr M$.

A smooth $G$-path $\sigma=\sigma_ng_n\sigma_{n-1}\cdots \sigma_1g_1\sigma_0$ in $M$ is said to be $\omega$-\emph{positive} if $\omega(\dot{\sigma}_k(\tau))>0$ for all $k=0,1,\cdots,n$ and $\tau\in [0,1]$. Geometrically, this means that the velocity vector $\dot{\sigma}_k(\tau)$ of each path $\sigma_k(\tau)$ always points in the direction in which $\overline{\omega}$ increases (i.e. locally, where $f_U$ increases).

\begin{definition}
A closed 1-form $\overline{\omega}$ on $X$ is called \emph{transitive} is for any point $[x]\in X\backslash \textnormal{zeros}(\overline{\omega})$ there exists an $\omega$-positive smooth $G$-loop at $x\in M$. 
\end{definition}

Before stating and showing the main result of this section we exhibit some results exploring the notions we have just introduced. These are also straightforward generalizations of the corresponding results proven in \cite{FKL} for the manifold case. Recall that we have a canonical decomposition $X= X_c\cup X_\infty$ provided by item (2) in Theorem \ref{thm1}. First of all, it holds that the points in $X_\infty$ represent no difficulty for transitivity, since such a condition is automatically satisfied:

\begin{proposition}\label{pro:Harmonic1}
Let $\overline{\omega}$ be a generic closed 1-form of Morse type on $X$. Then for any pair of points $[x], [y]$, which are not zeros of $\overline{\omega}$, lying in the same $G$-connected component of $\textnormal{int}(X_\infty)$, there exists an $\omega$-positive smooth $G$-path from $x$ to $y$. In particular, there exists an $\omega$-positive smooth $G$-loop through any point $[x]\in \textnormal{int}(X_\infty)$ with $\omega(x)\neq 0$. 
\end{proposition}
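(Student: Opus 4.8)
The strategy is to follow the manifold case in \cite{FKL} (and \cite{Honda}), transporting the argument to the orbifold setting via the local models collected in Remark \ref{RemakrStructure} and the handle decomposition for orbifold Morse theory from \cite{Hep,OV}. The key structural input is that, by Theorem \ref{thm2}(4), every $G$-connected component $N$ of $\textnormal{int}(X_\infty)$ carries the restricted class $\xi_\infty=\xi|_{N}$ of rank greater than $1$; equivalently, the image of $\textnormal{Per}_{\xi_\infty}$ in $(\mathbb{R},+)$ is a dense subgroup. I would first reduce the statement about $G$-loops to the statement about $G$-paths: given $[x]\in\textnormal{int}(X_\infty)$ with $\omega(x)\neq 0$, density of the period group lets me pick a smooth $G$-loop $\tau$ at $x$ (lying in $N$) with $\int_\tau\overline{\omega}$ equal to a small positive number $\epsilon>0$; if I can produce an $\omega$-positive smooth $G$-path from $x$ to $x$ realizing \emph{that same} period $\epsilon$ — or more flexibly, connect $x$ to itself $\omega$-positively after first sliding to a point $y$ with $\overline{f}(y)-\overline{f}(x)=\epsilon$ locally and then using the $G$-path version — I am done. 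So the heart of the matter is the $G$-path assertion for two points $[x],[y]$ in the same component.

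**Main construction.** For the $G$-path claim I would argue as follows. Cover a smooth $G$-path $\gamma$ from $x$ to $y$ inside $\pi^{-1}(N)$ by finitely many linearizing charts of the form $[U/G_U]$ on which $\overline{\omega}|_{U/G_U}=\overline{df_U}$ (using \cite[Lem. 8.5]{PPT} and the genericity hypothesis to keep zeros isolated and simple). On each chart the obstruction to being $\omega$-positive is exactly the presence of critical points of $f_U$; away from them one can always homotope a path segment, rel endpoints, to one on which $df_U$ is strictly positive, provided the endpoints sit at different levels of $f_U$, because the level sets have codimension one. The global mechanism that lets us arrange the levels to always increase is the density of $\textnormal{Per}_{\xi_\infty}$: whenever the naive concatenation would force a local \emph{decrease} of $\overline{f}$, we splice in a short $G$-loop of small positive period to "spend" the deficit, exactly as on p.\ 128--129 of \cite{F} and in \cite[s.\ 8]{FKL}. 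Passing a zero of index $\lambda$ with $1<\lambda<n-1$ presents no obstruction since, by the handle picture in Remark \ref{RemakrStructure}, the unstable/stable cones $D_-(\mathcal{O}_x)_x/G_x$ and $D_+(\mathcal{O}_x)_x/G_x$ are both $G$-connected and one routes the path through the chart while keeping $\overline{Q}_{F_U}=-\sum y_j^2+\sum y_j^2$ monotone; for zeros of index $1$ or $n-1$ that lie in $\textnormal{int}(X_\infty)$, the relevant cone may be disconnected, but since $[x],[y]$ are assumed to lie in the \emph{same} $G$-connected component of $\textnormal{int}(X_\infty)$ one can choose $\gamma$ avoiding the "wrong" branch, or detour around such a zero entirely.

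**The main obstacle.** The delicate point — and the step I expect to require the most care — is the global bookkeeping needed to guarantee that the accumulated period corrections can always be made \emph{positive} and can be inserted \emph{within} $\pi^{-1}(N)$ without leaving the component or hitting the $n-1$ zeros where the leaf geometry is singular. Concretely, one must show: (i) there exist $\omega$-positive $G$-loops of arbitrarily small positive period based at (nearly) every regular point of $N$ — this follows from $\textnormal{rk}(\xi_\infty)>1$ together with the local product structure $\overline{\mathcal{L}}\times I\cong W$ from Lemma \ref{Lemma:localReebStability} applied to regular leaves of $N$, but the leaves of $N$ are typically non-compact so the Reeb chart must be replaced by a purely local argument; and (ii) these correction loops can be conjugated along $\gamma$ to based loops at a common point, which is where $G$-connectedness of $N$ (and the fact that $G$-path concatenation descends to $\Pi_1^{\textnormal{orb}}$) is used. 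Once (i) and (ii) are in place, the proof is a finite induction over the charts covering $\gamma$, at each stage replacing a non-positive segment by a positive one plus a correction loop, exactly mirroring \cite[s.\ 8]{FKL}; the genericity assumption is what keeps the induction finite and the singular leaves isolated, via Lemma \ref{w-graph} and Lemma \ref{lem:generic}.
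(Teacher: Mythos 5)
Your strategy differs from the paper's, and it contains a genuine gap. The paper's proof is a soft open--closed argument: it introduces the \emph{positive upland} $U_x\subset\textnormal{int}(X_\infty)$, the set of points $[y]$ such that $y$ is the endpoint of an $\omega$-positive smooth $G$-path starting at $x$, observes that $U_x$ is open and is a union of leaves of $\tilde{\mathcal{F}}_\omega$, shows (following \cite[Thm. 9.13]{F}) that $\textnormal{cl}(U_x)\setminus U_x$ is contained in $\lbrace [y]\in X_\infty:\omega(y)=0\rbrace\cup\partial X_\infty$, and concludes from Theorem \ref{thm2} that $U_x$ is the entire $G$-connected component. No chart-by-chart surgery on a prescribed path is performed, and genericity enters only through item (5) of Theorem \ref{thm2}, which identifies $\partial X_\infty$ with the compact singular leaf components of noncompact leaves.

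The gap in your construction is the splicing step. Inserting a short $G$-loop of small \emph{positive period} changes the total integral $\int\overline{\omega}$ along the path, but it does nothing to remove a segment along which a local primitive $f_U$ must \emph{decrease}: an $\omega$-positive path strictly increases every local primitive, so no correction based near the point where the decrease occurs can fix it; the only way to reach a leaf at a lower local level by an $\omega$-positive path is to travel globally around the component, and the existence of such a detour is precisely what has to be proved. You are conflating ``positive total period'' with ``pointwise $\omega$-positive.'' Relatedly, your sub-claim (i) --- that $\omega$-positive $G$-loops of arbitrarily small positive period exist at (nearly) every regular point of $N$ --- is essentially the ``in particular'' clause of the proposition itself; $\textnormal{rk}(\xi_\infty)>1$ and density of periods only give $G$-loops of small positive \emph{total} integral, not pointwise $\omega$-positive ones, so the proposal assumes the hard part rather than proving it. If you replace the induction over charts by the positive-upland argument, your remaining observations (leaf-saturation of the reachable set, the cone models at zeros of index $1$ and $n-1$) do fall into place as the ingredients needed to control $\textnormal{cl}(U_x)\setminus U_x$.
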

\begin{proof}
Let us denote by $\textnormal{int}(M_\infty)$ the open subset in $M$ given by $\pi^{-1}(\textnormal{int}(X_\infty))$. This clearly determines an open Lie subgroupoid $\textnormal{int}(G_\infty)\rr \textnormal{int}(M_\infty)$ of $G\rr M$ presenting $\textnormal{int}(X_\infty)$. We start by fixing a point $[x]\in \textnormal{int}(X_\infty)$ which is not a zero of $\overline{\omega}$ and considering the \emph{positive upland} $U_x$ in  $\textnormal{int}(X_\infty)$. That is, the set defined as the points $[y]\in X$ such that $y\in \textnormal{int}(M_\infty)$ is the endpoint of an $\omega$-positive smooth $G$-path starting at $x$. Note that if $y$ can be reached starting from $x$ by moving along an $\omega$-positive smooth $G$-path then the same is true for any other point lying in the same leaf $\mathcal{L}$ as $y$. Hence, $U_x$ is open and is given as a union of leaves of the singular foliation $\tilde{\mathcal{F}}_\omega$. Additionally, using the fact that $\partial X_\infty$ is the union of all compact singular leaf components of noncompact leaves (see item (e) in Theorem \ref{thm1}) and arguing exactly as in the proof of \cite[Thm. 9.13]{F} it is simple to check that $\textnormal{cl}(U_x)\backslash  U_x$ is contained in $\lbrace [y]\in X_\infty: \omega(y)=0\rbrace\cup \partial X_\infty$. This implies that $U_x$ agrees with the $G$-connected component of $\textnormal{int}(X_\infty)$ by Theorem \ref{thm1}. So, the result holds true as claimed. 
\end{proof}

\begin{proposition}\label{pro1}
Let $\overline{\omega}$ be a transitive closed 1-form of Morse type on $X$ such that the associated singular foliation $\tilde{\mathcal{F}}_\omega$ has a compact leaf. Then, there exists a nonzero integral cohomology class $\theta\in H^1(X,\mathbb{Z})$ such that $\theta \cup \xi$ vanishes in $H^2(X)$. Here $\xi$ stands for the cohomology class $[\omega]\in H^1(X)$.
\end{proposition}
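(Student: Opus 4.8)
The plan is to produce the class $\theta$ from a compact leaf of $\tilde{\mathcal{F}}_\omega$ and then use transitivity to force the cup product $\theta\cup\xi$ to vanish. First I would take a compact leaf $\overline{\mathcal{L}}$ of $\tilde{\mathcal{F}}_\omega$ which we may assume to be nonsingular (any singular leaf contains a compact singular leaf component, or else one slides to a nearby regular leaf using Lemma \ref{Lemma:localReebStability}). By Lemma \ref{Lemma:localReebStability} there is a neighborhood $W\cong \overline{\mathcal{L}}\times I$ of $\overline{\mathcal{L}}$ on which $\tilde{\mathcal{F}}_\omega$ is given by the level sets of a submersion-type function $\overline{f}_W\colon W\to \mathbb{R}$. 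Pushing $\overline{\mathcal{L}}$ slightly if necessary, I would realize $\overline{\mathcal{L}}$ as a two-sided codimension-one compact suborbifold of $X$ whose normal bundle is trivial; its Poincaré--Lefschetz (orbifold) dual is an integral class $\theta\in H^1(X,\mathbb{Z})$, which is nonzero because the coorientation means $\overline{\mathcal{L}}$ has nonzero intersection number with a short transversal arc, and that arc can be completed to a $G$-loop crossing $\overline{\mathcal{L}}$ (this is where $G$-connectedness and the local product structure get used). Concretely, $\theta$ is represented by a closed basic $1$-form supported in $W$, namely (a bump times) $\overline{df}_W$ extended by zero.

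Next I would compute $\theta\cup\xi$. Since $\theta$ is represented by a closed basic $1$-form $\alpha$ supported in $W$, the product $\theta\cup\xi$ is represented by $\alpha\wedge\omega$, again supported in $W$. On $W$ the foliation $\tilde{\mathcal{F}}_\omega$ coincides with the level sets of $\overline{f}_W$, and Lemma \ref{Lemma:localReebStability} tells us $\xi|_W=0$, i.e. $\omega|_{\pi^{-1}(W)}=d g$ for a basic function $g$; hence $\alpha\wedge\omega=\alpha\wedge dg=d(g\,\alpha)$ is exact on $W$ (and zero outside $W$), so $\theta\cup\xi=0$ in $H^2(X)$. This half does not even need transitivity; the reason the proposition is phrased with the transitivity hypothesis is that it guarantees the geometric side is nontrivial in the sense needed later — but I would double-check whether transitivity is in fact needed to guarantee $\theta\neq 0$, or whether it is there because, in the transitive case, one wants a stronger conclusion (that $\theta$ is nonzero \emph{and} its dual leaf is as above). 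If transitivity is genuinely required, the role it plays is the following: transitivity forbids $X_c=X$ from having a leaf that bounds, and more precisely it guarantees the compact leaf $\overline{\mathcal{L}}$ is homologically essential, because an $\omega$-positive $G$-loop based near $\overline{\mathcal{L}}$ crosses $\overline{\mathcal{L}}$ with a definite sign and hence has nonzero pairing with $\theta$ (this uses the $G$-path integral and the orbifold Hurewicz map $h$ from the proof of Lemma \ref{Lemma:localReebStability}).

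The main obstacle, and the step I would spend the most care on, is the construction of the integral dual class $\theta$ in the orbifold setting: one must check that a compact, two-sided, codimension-one suborbifold of a compact orbifold $X$ admits a well-defined Poincaré--Lefschetz dual in $H^1(X,\mathbb{Z})$, and that this dual is nonzero precisely when the suborbifold is coorientable and homologically nontrivial. I would handle this by working on the presenting groupoid: $\overline{\mathcal{L}}$ is presented by a proper étale Lie subgroupoid $G_{\mathcal{L}}\rr \mathcal{L}$ with a trivialized, $G$-equivariantly oriented normal bundle (the orientation coming from the coorientation, exactly as in the orientation argument for $\nu(X_j)$ in the proof of Theorem \ref{thm1}), build the basic Thom form of this normal bundle supported in a saturated tubular neighborhood $\pi^{-1}(W)$, and let $\alpha$ be its image in $\Omega^1_{\textnormal{bas}}(G)$; its basic cohomology class is $\theta$. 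Integrality follows because $\int_\sigma\alpha$ over any $G$-loop $\sigma$ equals the (integer) intersection number of $\sigma$ with $\overline{\mathcal{L}}$, which is well-defined since $\alpha$ is compactly supported transverse to the leaf. Once $\theta$ is in hand, everything else is the short exactness-of-supports computation above.
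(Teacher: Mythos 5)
Your proposal follows essentially the same route as the paper: pass to a nonsingular compact leaf $\overline{\mathcal{L}}$, use the local first integral from Lemma \ref{Lemma:localReebStability} to build a closed basic $1$-form $\alpha$ supported in a product neighborhood $W\cong\overline{\mathcal{L}}\times I$ (the paper takes $\alpha=d(\rho\circ f_U)$ for a cutoff $\rho$, which is exactly your ``bump times $d\overline{f}_W$''), declare $\theta=[\alpha]\in H^1(X,\mathbb{Z})$, and observe that $\alpha\wedge\omega$ is trivial near $\overline{\mathcal{L}}$ because both forms are pulled back from the transversal interval (your exactness computation $\alpha\wedge\omega=\pm d(g\alpha)$ is a fine variant; in fact $\alpha\wedge\omega=0$ pointwise since both are multiples of $df_U$). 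One point needs firming up, and it is the one you flagged yourself: your first justification for $\theta\neq 0$ --- ``the transversal arc can be completed to a $G$-loop crossing $\overline{\mathcal{L}}$'' --- does not work, because the completion may cross $\overline{\mathcal{L}}$ again with the opposite coorientation and the total pairing $\int_\sigma\alpha$ can cancel to zero; indeed if $\overline{\mathcal{L}}$ separates $X$ then \emph{every} $G$-loop has zero signed intersection with it and $\theta=0$. So transitivity is genuinely required, and the correct argument is the one in your second paragraph: an $\omega$-positive $G$-loop through a point of $\overline{\mathcal{L}}$ crosses the leaf only in the positive direction, so $\int_\sigma\alpha>0$ and $\overline{\mathcal{L}}$ cannot bound. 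This is precisely the paper's remark that ``the leaf does not bound, because the transitivity condition would be violated otherwise.'' With that clause promoted from a hedge to the actual proof of $\theta\neq 0$, your argument matches the paper's.
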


\begin{proof}
Since $\tilde{\mathcal{F}}_\omega$ has a compact leaf we get that there exist nonsingular compact leaves $\overline{\mathcal{L}}$ in $X$, see item (a) in Theorem \ref{thm1}. Note that the leaf $\overline{\mathcal{L}}$ does not bound in $X$, because the transitivity condition would be violated otherwise. There exists an open neighborhood $U$ of $\overline{\mathcal{L}}$ and a basic smooth function $f_U:\pi^{-1}(U)\to \mathbb{R}$ such that the induced function $\overline{f}:U\to \mathbb{R}$ has no critical points and $\omega|_{\pi^{-1}(U)}=df_U$, see Lemma \ref{Lemma:localReebStability}. For simplicity, we may assume that $\overline{f}^{-1}(0)=\overline{\mathcal{L}}$ and there is an $\epsilon>0$ small enough such that the level set $\overline{f}^{-1}(\tau)\cong \overline{\mathcal{L}}$ for all $\tau \in (-\epsilon,\epsilon)$. Let us consider a smooth function $\rho:(-\epsilon,\epsilon)\to \mathbb{R}$ such that $\rho(\tau)$ equals $\tau$ if $\vert \tau \vert <\epsilon/2$, $1/2$ if $\tau\in (3/4\epsilon,\epsilon)$, and $-1/2$ if $\tau\in (-\epsilon,-3/4\epsilon)$. We define $\alpha|_{\pi^{-1}(U)}=d(\rho\circ f_U)$ and $\alpha|_{M\backslash \pi^{-1}(U)}=0$.  This is clearly the closed basic 1-form on $M$ which represents the integral nonzero cohomology $\theta\in H^1(X,\mathbb{Z})$ we are looking for, compare \cite[Prop. 9.14]{F}.
\end{proof}

It is well-known that the singular homology groups $H_\bullet(X,\mathbb{Z})$ of $X$ can be described in terms of the simplicial structure of the nerve of $G\rr M$, more precisely, as their associated total homology groups \cite{Be}. 

\begin{corollary}\label{cor2}
Suppose that $X$ is a compact orbifold such that the singular homology $H_1(X,\mathbb{Z})$ has no torsion and the cup-product pairing $H^1(X)\times H^1(X)\to H^2(X)$ is nondegenerate\footnote{That is, for each nonzero $\xi\in H^1(X)$ there exists $\eta\in H^1(X)$ such that $\xi \cup \eta\neq 0$.}. Let $\xi \in H^1(X)$ be a cohomology class with $\textnormal{rk}(\xi)=b_1(X)$, the first Betti number of $X$. Then, any transitive closed $1$-form of Morse type $\overline{\omega}$ representing $\xi$ is such that all the leaves of $\tilde{\mathcal{F}}_\omega$ are noncompact. 
\end{corollary}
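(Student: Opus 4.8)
The plan is to argue by contradiction, reducing the assertion to the preceding Proposition together with one elementary fact about the rank of a class in $H^1(X)$. Suppose some leaf of the singular foliation $\tilde{\mathcal F}_\omega$ is compact. Since $\overline{\omega}$ is transitive and of Morse type, the preceding Proposition applies and produces a nonzero integral class $\theta\in H^1(X,\mathbb Z)$ with $\theta\cup\xi=0$ in $H^2(X)$. Because degree-one classes anticommute, this is the same as $\xi\cup\theta=0$; that is, $\xi$ lies in the kernel $V:=\ker L_\theta\subset H^1(X)$ of the linear map $L_\theta\colon H^1(X)\to H^2(X)$, $\eta\mapsto\eta\cup\theta$.

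The first thing to record is that $V$ is a \emph{proper, rational} subspace of $H^1(X)$ containing $\xi$. It is proper: as $\theta\ne 0$, nondegeneracy of the cup-product pairing gives some $\eta$ with $\theta\cup\eta\ne 0$, hence $L_\theta(\eta)=-\theta\cup\eta\ne 0$ and $\eta\notin V$. It is defined over $\mathbb Q$: since $\theta$ is integral, $L_\theta$ carries $H^1(X;\mathbb Q)$ into $H^2(X;\mathbb Q)$, so $V=\ker(L_\theta|_{\mathbb Q})\otimes_{\mathbb Q}\mathbb R$. Hence the proof will be complete once we know that a class of maximal rank $\textnormal{rk}(\xi)=b_1(X)$ cannot sit inside any proper rational subspace of $H^1(X)$.

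For this last point I would use the torsion-freeness of $H_1(X,\mathbb Z)$: it makes $H^1(X,\mathbb Z)$ and $H_1(X,\mathbb Z)$ free abelian of rank $b:=b_1(X)$ and mutually dual under evaluation $\langle\cdot,\cdot\rangle$, and, since $\textnormal{Per}_\xi$ factors through the orbifold Hurewicz homomorphism, the image of $\textnormal{Per}_\xi$ is contained in $\{\langle\xi,c\rangle : c\in H_1(X,\mathbb Z)\}$, so $\textnormal{rk}(\xi)$ is at most the $\mathbb Q$-rank of this subgroup of $\mathbb R$. If $\xi$ belonged to a rational subspace $V$ of dimension $d$, one picks a $\mathbb Z$-basis of $H_1(X,\mathbb Z)$ whose last $b-d$ vectors span the saturated sublattice $\Ann(V)\cap H_1(X,\mathbb Z)$; pairing $\xi$ against this basis kills its last $b-d$ entries, so the subgroup above has $\mathbb Q$-rank $\le d$, whence $\textnormal{rk}(\xi)\le d<b=b_1(X)$, a contradiction. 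Therefore no leaf of $\tilde{\mathcal F}_\omega$ is compact, i.e.\ all of them are noncompact. The one step I would write out carefully is this last lattice argument — verifying that $\Ann(V)\cap H_1(X,\mathbb Z)$ is saturated in $H_1(X,\mathbb Z)$ (so it extends to a $\mathbb Z$-basis) and pinning down the factorization of $\textnormal{Per}_\xi$ through $H_1(X,\mathbb Z)$ from the orbifold de Rham and Hurewicz results recalled above and in \cite{V}; beyond this bookkeeping I anticipate no genuine obstacle.
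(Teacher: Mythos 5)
Your argument is correct and is essentially the paper's proof: the paper simply says the corollary "follows by arguing exactly as in the proof of \cite[Thm.\ 9.15]{F}", and what you have written out — invoking the preceding proposition to get a nonzero integral $\theta$ with $\theta\cup\xi=0$, then showing a class of rank $b_1(X)$ cannot lie in a proper rational subspace of $H^1(X)$ via the lattice/annihilator computation — is precisely that argument. The only point to keep an eye on is the factorization of $\textnormal{Per}_\xi$ through $H_1(X,\mathbb{Z})$ via the orbifold Hurewicz map, which you correctly flag and which is supplied by \cite[Prop.\ 2.2]{V} as recalled in the paper.
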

\begin{proof}
This follows by arguing exactly as in the proof of \cite[Thm. 9.15]{F}.
\end{proof}

We can associate an oriented graph with any closed 1-form on $X$ such that its transitivity is reflected in the properties of such a graph. 

\begin{definition}
	Let $\Gamma$ be an oriented connected graph such that:
	\begin{enumerate}
		\item if $x,y\in \Gamma$ are vertices then there is a path from $x$ to $y$ that traverses edges of $\Gamma$ only in the positive direction, and 
		\item for any vertex $x\in \Gamma$ there is a closed path through $x$ that traverses edges of $\Gamma$ only in the positive direction.
	\end{enumerate}
	
	A graph $\Gamma$ satisfying both conditions is called \emph{Calabi graph}.
\end{definition}

It is clear that condition (1) implies (2). In fact, these items are actually equivalent as shown for instance in \cite{FKL}. If $\overline{\omega}$ is a closed 1-form of Morse type on $X$ then we can associate a graph $\overline{\Gamma}_\omega$ by mimicking the prescription provided in Lemma \ref{w-graph}. Namely, it is given by the quotient space $X/\sim$, where $[x]\sim [y]$ if and only if they lie on the same compact leaf of the singular foliation $\tilde{\mathcal{F}}_\omega$ or they belong to the same $G$-connected component of $X_\infty$. On the one hand, points of $\overline{\Gamma}_\omega$ corresponding to the $G$-connected components of $X_\infty$ are called \emph{special vertices}. Near a point representing a nonsingular compact leaf the graph $\overline{\Gamma}_\omega$ is locally homeomorphic to an interval. On the other hand, each compact leaf of $\tilde{\mathcal{F}}_\omega$ is represented by an \emph{ordinary 
point} of $\overline{\Gamma}_\omega$. The vertices of $\overline{\Gamma}_\omega$ associated to compact leaves are described in the proof of Lemma \ref{w-graph}. 

As an immediate consequence of Theorem \ref{thm1}, one concludes that:

\begin{proposition}
A generic closed 1-form of Morse type $\overline{\omega}$ is transitive if and only if the oriented graph $\overline{\Gamma}_\omega$ is transitive. 
\end{proposition}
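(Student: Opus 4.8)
The plan is to show that the notion of transitivity for $\overline{\omega}$ translates faithfully into the combinatorial conditions defining a Calabi graph for $\overline{\Gamma}_\omega$, by keeping track of what $\omega$-positive $G$-paths look like on the two pieces $X_c$ and $X_\infty$ of the decomposition from Theorem \ref{thm2}. The quotient map $\pi_\Gamma\colon X\to \overline{\Gamma}_\omega$ collapses each compact leaf to an ordinary point and each $G$-connected component of $X_\infty$ to a special vertex; since $\overline{\omega}=d\overline{f}$ locally on a neighborhood of any compact nonsingular leaf (Lemma \ref{Lemma:localReebStability}), the value of $\overline{f}$ strictly increases along an $\omega$-positive $G$-path that stays near such a leaf, and this increase is exactly recorded by traversing the corresponding edge of $\overline{\Gamma}_\omega$ in the positive direction. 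So an $\omega$-positive smooth $G$-path/loop in $X$ projects to a positively oriented path/closed path in $\overline{\Gamma}_\omega$ passing through the image vertex; this is the ``easy'' direction of each equivalence and gives: $\overline{\omega}$ transitive $\Rightarrow$ every ordinary point of $\overline{\Gamma}_\omega$ lies on a positively oriented closed path.

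For the converse, suppose $\overline{\Gamma}_\omega$ is transitive and let $[x]\in X$ with $\omega(x)\neq 0$. If $[x]$ lies in $\textnormal{int}(X_\infty)$, then Proposition \ref{pro:Harmonic1} already produces an $\omega$-positive smooth $G$-loop through $x$, so there is nothing to do. If $[x]$ lies on a compact leaf $\overline{\mathcal{L}}$, then $\pi_\Gamma([x])$ is an ordinary point sitting on some edge; by the assumed transitivity of $\overline{\Gamma}_\omega$ there is a positively oriented closed path through that point. The task is then to lift this combinatorial closed path to an honest $\omega$-positive smooth $G$-loop at $x$. One lifts edge by edge: on an edge between two ordinary points, one uses the gradient-type flow of the local primitive $\overline{f}_W$ (from Lemma \ref{Lemma:localReebStability}) with respect to a metric to move through the family of compact leaves in the increasing direction; when the path enters a vertex one must pass a zero of $\overline{\omega}$ — here one uses the Morse handle picture around the zero from Remark \ref{RemakrStructure} / the groupoid Morse lemma (see \cite{Hep,OV}) to push the $G$-path through the singular leaf while keeping $\omega(\dot\sigma_k)>0$, exactly as in the manifold argument of Farber--Katz--Levine; and when the path enters a special vertex one enters $\textnormal{int}(X_\infty)$ at one point, uses Proposition \ref{pro:Harmonic1} to connect to the exit point by an $\omega$-positive smooth $G$-path inside that component, and leaves again. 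Concatenating all these local $\omega$-positive $G$-paths, and using genericity to ensure each singular leaf contains exactly one zero so that the passages are controlled, yields an $\omega$-positive smooth $G$-loop at $x$, proving transitivity of $\overline{\omega}$.

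The main obstacle is the lifting step at the vertices: constructing the $\omega$-positive $G$-path that crosses a singular leaf while never losing positivity of $\omega(\dot\sigma_k(\tau))$, and doing so in the groupoid language so that the pieces glue into a genuine smooth $G$-path (respecting sources and targets of the connecting arrows $g_j$). This requires the local normal form $\overline{\omega}=-\sum_{j\le \lambda_x} y_j\,dy_j + \sum_{j>\lambda_x} y_j\,dy_j$ around a zero, valid up to isotropy by Remark \ref{RemakrStructure}, together with the fact — encoded in the structure of the trivalent graph $\overline{\Gamma}_\omega$ from Lemma \ref{w-graph} and its description in \cite{Honda} — that whenever the combinatorial path can traverse a vertex positively, there is a corresponding choice of exit direction in the normal fiber that keeps $\omega$ positive. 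Everything else is either a direct invocation of Proposition \ref{pro:Harmonic1} and Theorem \ref{thm2}, or a routine orbifold adaptation of the arguments in \cite[s.~9]{F} and \cite{FKL}.
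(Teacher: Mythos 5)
Your argument is correct and follows essentially the route the paper intends: the paper records this proposition with no written proof, presenting it as an ``immediate consequence of Theorem \ref{thm2}'', and your sketch is precisely the expansion of that reduction --- special vertices handled by Proposition \ref{pro:Harmonic1}, edges by the local primitives of Lemma \ref{Lemma:localReebStability}, and vertex crossings by the Morse local model as in \cite{FKL,F}. You also correctly isolate the only genuinely nontrivial step (lifting a positively oriented graph path through a singular leaf while preserving $\omega$-positivity in the $G$-path formalism), which the paper likewise delegates to the Farber--Katz--Levine argument.
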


We can now prove the main result of this section (see Theorem \ref{thm3}), concerning an extention of the celebrated Calabi's theorem (consult \cite{Calabi}), to the realm of orbifolds. 

\begin{proof}[Proof of Theorem \ref{thm3}]
One can follow the strategy used to prove this result in the case of manifolds (see \cite[Thm. 9.11]{F}), together with some of the tools for the Hodge star operator studied in \cite{Chian,FarProSea} for Riemannian orbifolds. The arguments for the proof in our case are similar, so we shall only sketch its main ideas for the sake of completeness. Let us start by assuming that $X$ is orientable, so that the normal action of each isotropy $G_x\curvearrowright \nu_x(\mathcal{O}_x)$ is orientation-preserving.

Suppose that there is a Riemannian metric on $X$ for which $\overline{\omega}$ is intrinsically harmonic. By contradiction, let us further assume that $\overline{\omega}$ is not transitive, so that there is a point $[x]\in X$ such that $\omega(x)\neq 0$ and there exists no $\omega$-positive smooth $G$-loop through $x$. Without loss of generality, 
we may assume that the leaf $\overline{\mathcal{L}}$ of the singular foliation $\tilde{\mathcal{F}}_\omega$ containing $[x]$ is nonsingular, thus obtaining that the arguments used the proof of Proposition \ref{pro:Harmonic1} guarantee that $\overline{\mathcal{L}}$ must be compact. If not, an $\omega$-positive smooth $G$-loop through $x$ would exist. 

Let $U_x$ be the positive upland at $[x]$. Recall that it is defined to be the set of all points $[y]\in X$ such that $y\in M$ is reachable starting from $x$ by an $\omega$-positive smooth $G$-path. It is clear that the space of all $y\in M$ satisfying the latter condition is saturated, so that it determines a subgroupoid of $G\rr M$. More importantly, the closure $Y$ of $U_x$ in $X$ gives rise to a compact orbifold with boundary. From the very definition of $U_x$ it follows that $\overline{\mathcal{L}}\subset \partial Y$. Besides, for any point $[y]\in \partial Y$ and for any tangent vector $[v]\in T_{[y]}X\approx \nu_y(\mathcal{O}_y)/G_y$ pointing inside $Y$ we get that $\overline{\omega}([v])=\omega(v)>0$, as the normal actions of the isotropies are orientation-preserving. Therefore, the orientation on $X$ induces an orientation on the boundary $\partial Y$ for which $(\star \overline{\omega})|_{\partial Y}>0$. Here $\star$ stands for the Hodge star operator associated with both the Riemannian metric and orientation on $X$. Using the orbifold version of the Stokes formula one produces a contradiction since $\int_{Y}d(\star \overline{\omega})=0$, as $\overline{\omega}$ is harmonic. 

Conversely, let us suppose that $\overline{\omega}$ is transitive, so that for any point $[x]\in X\backslash \textnormal{zeros}(\overline{\omega})$ there exists an $\omega$-positive smooth $G$-loop $\sigma_x$ at $x\in M$. We may think of $\sigma_x$ as an actual loop $\pi\circ \sigma_x$ within $X$ in such a way its image determines an orbifold embedding \cite{BorzeBrun,ChoHongShin}. As in the manifold case, for $n>2$ the last assertion can be obtained after applying a small perturbation 
and for $n = 2$ one may need to change the loop by performing simple modifications near the double points, where the tangent vectors might be not defined. Using the orbifold tubular neighborhood theorem (i.e. linearization \cite{dHF2}), orbifold partitions of unity (see \cite{KleinerLott,CM}), and carefully adapting the ideas in the proof of \cite[Thm. 9.11]{F}, one can build a closed differential form $\overline{\psi}\in \Omega^{n-1}(X)$, as well as a Riemannian metric on $X$ such that:
\begin{itemize}
\item $\overline{\omega}\wedge \overline{\psi}\geq 0$ (the positivity is understood with respect to the specified orientation on $X$),
\item $(\overline{\omega}\wedge \overline{\psi})([x])=0$ if and only if $[x]\in X$ is a zero of $\overline{\omega}$, and
\item for any point $[x_j]\in \textnormal{zeros}(\overline{\omega})=\lbrace [x_1],\cdots, [x_k]\rbrace$ there exists an open neighborhood $U_j$ such that the Riemannian metric restricted to it verifies $\overline{\psi}|_{U_j}=\star (\overline{\omega}|_{U_j})$.
\end{itemize}

More importantly, one has that $\overline{\psi}=\star \overline{\omega}$ and hence $d(\star \overline{\omega})=0$, as desired. 

Finally, if $X$ is not necessarily orientable	 then we use the orbifold orientable bundle $o(X)\cong \det (T^\ast X)\cong \wedge^n(T^\ast X)$ as done in \cite[s. 9.4.4]{F}.
\end{proof}

It is worth stressing that building upon the works \cite{FKL,Honda} several other interesting results concerning the Morse-theoretic properties of harmonic closed 1-forms on compact orbifolds can be established by employing similar ideas. Such results may be used to determine whether one can improve the topological lower bounds for the number of zeros of a closed 1-form of Morse type on a compact orbifold. The first attempt in that direction is provided by the Novikov type inequalities, which were recently established in \cite{V}.

\section{Examples}\label{sec:5}

Motivated by the constructions developed in \cite{FKL}, in this short section we provide some examples which allow us to illustrate our main results.

\begin{example}
	Suppose that $M$ is a closed manifold with abelian non-cyclic fundamental group. Let $\mathcal{F}$ be any proper foliation on $M$ with all principal leaves being 1-connected and let $X$ denote the compact orbifold presented by the holonomy groupoid $\textnormal{Hol}(M,\mathcal{F})\rr M$. From Remark \ref{remark1} it follows that $\Pi_1^{\textnormal{orb}}(X)\cong \Pi_1(M)$. If $\xi\in H^1(X)$ is a cohomology class such that its homomorphism of periods $\textnormal{Per}_\xi$ is a monomorphism then it can not be represented by a closed 1-form of Morse type on $X$ with all the leaves of the singular foliation $\tilde{\mathcal{F}}_\omega$ being compact. This is because $\textnormal{Per}_\xi$ can not be factorized through a free group, as any abelian subgroup of a free group is cyclic. 
	
	The same conclusion holds true in general if we consider cohomology classes for compact orbifolds having abelian non-cyclic orbifold fundamental group and for which the homomorphism of periods is a monomorphism.
\end{example}

One can obtain more explicit and interesting examples by exploiting the 2-torus and the compact 2-orbifold known as the \emph{pillowcase}. To do this, we need to briefly introduce the orbifold analogue of the connected sum constructions explored in \cite{FKL}. Such general connected sum operations accept as input two compact two-dimensional orbifolds $X_1, X_2$ with closed 1-forms of Morse type $\overline{\omega}_1, \overline{\omega}_2$ and constructs a closed 1-form of Morse type $\overline{\omega}$ on the connected sum $X:=X_1\sharp X_2$. Connected sums of compact orbifolds have been used for instance in \cite{FarProSea,KleinerLott}. The construction we want to describe is such that $\overline{\omega}|_{X_j\backslash D_j}=\overline{\omega_j}|_{X_j\backslash D_j}$ for $j=1,2$, where $D_j$ is any (small enough) disk in $X_i$ containing no critical nor orbifold singular points of $X_i$ and within which the connected sum will be constructed. Furthermore, the boundary $\partial D_i$ is not contained in any leaf of the foliation $\ker(\overline{\omega}_i)$.

Let us choose functions $\overline{f}_j:D_j\to \mathbb{R}$ such that $\overline{\omega}_j|_{D_j}=d\overline{f}_j$, compare \cite[Prop. 3.3]{V}. One can identity $D_j$ with an open rectangle $(a_j, b_j)\times (c_j, d_j)$ in $\mathbb{R}^2$, so that $\overline{f}_1(x,y)=y$. This enables us to move the rectangle by any translation in $\mathbb{R}^2$. We can now perform a connected sum of
$D_1$ and $D_2$ ambiently by connecting them with a straight tube in $\mathbb{R}^3$ in three different ways (see \cite[Fig. 9]{FKL}):

\begin{enumerate}
\item[(a)] $\overline{f}_1(D_1)\cap \overline{f}_2(D_2)=J$ is nonempty, the connecting tube intersects $D_j$ inside $\overline{f}_j^{-1}(J)$, and is
approximately horizontal,
\item[(b)] $\overline{f}_1(D_1)\cap \overline{f}_2(D_2)$ is empty, or
\item[(c)] $\overline{f}_1(D_1)\cap \overline{f}_2(D_2)=J$ is nonempty and the connecting tube has its two critical points at the same level.
 \end{enumerate}

We define $\overline{f}:D_1\sharp D_2\to \mathbb{R}$ to be the restriction of the height function, so that $d\overline{f}$ blends with $\overline{\omega}_1$ and $\overline{\omega}_2$, thus defining a closed 1-form on the connected sum $X$. It is important to notice that each of these constructions introduces two new critical points $[x], [y]$ of index 1. Additionally, $\overline{f}([x])<\overline{f}([y])$ in (a), $\overline{f}([x])>\overline{f}([y])$ in (b), and $\overline{f}([x])=\overline{f}([y])$ in (c).

If $\overline{\omega}_1$ and $\overline{\omega}_2$ are both transitive then so is $\overline{\omega}$ under construction (a) and every leaf of $\tilde{\mathcal{F}}_\omega$ intersects at least one leaf of $\tilde{\mathcal{F}}_{\omega_1}$ and one leaf of
$\tilde{\mathcal{F}}_{\omega_2}$. Under construction (b), there are new compact leaves produced and under construction
(c) there is just one new compact singular leaf component produced.

\begin{example}[Pillowcase]
	Consider the 2-torus $\mathbb{T}^2$ embedded in $\mathbb{R}^3$ together with the action by the group $\mathbb{Z}_2$ given by rotations of angle $\pi$ around one of the axes of $\mathbb{T}^2$. The quotient space $\mathbb{T}^2/\mathbb{Z}_2$ is an orbifold whose underlying space is homeomorphic to the 2-sphere and whose singular locus consists of four singular points each with isotropy $\mathbb{Z}_2$. As a consequence of  Remark \ref{remark1}, it follows that the orbifold fundamental group of $\mathbb{T}^2/\mathbb{Z}_2$ is a semi-direct product $\Pi_1^{\textnormal{orb}}(\mathbb{T}^2/\mathbb{Z}_2)\cong \mathbb{Z}^2\rtimes \mathbb{Z}_2$. The standard height function $h$ on $\mathbb{T}^2$ is invariant under the $\mathbb{Z}_2$-action, so that it descends to define an orbifold Morse function $\overline{h}:\mathbb{T}^2/\mathbb{Z}_2\to \mathbb{R}$ with one critical point $[s]$ of index data $(0,\mathbb{Z}_2)$, two critical points $[r],[q]$ of index data $(1,\mathbb{Z}_2)$, and one critical point $[p]$ of index data $(2,\mathbb{Z}_2)$. The foliations on $\mathbb{T}^2$ and $\mathbb{T}^2/\mathbb{Z}_2$ determined by the level set of $h$ and $\overline{h}$ are respectively depicted in Figure \ref{Fig:Pillow0}.
\end{example}
\begin{figure}[H]
	\centering
	\includegraphics[width=0.5\textwidth]{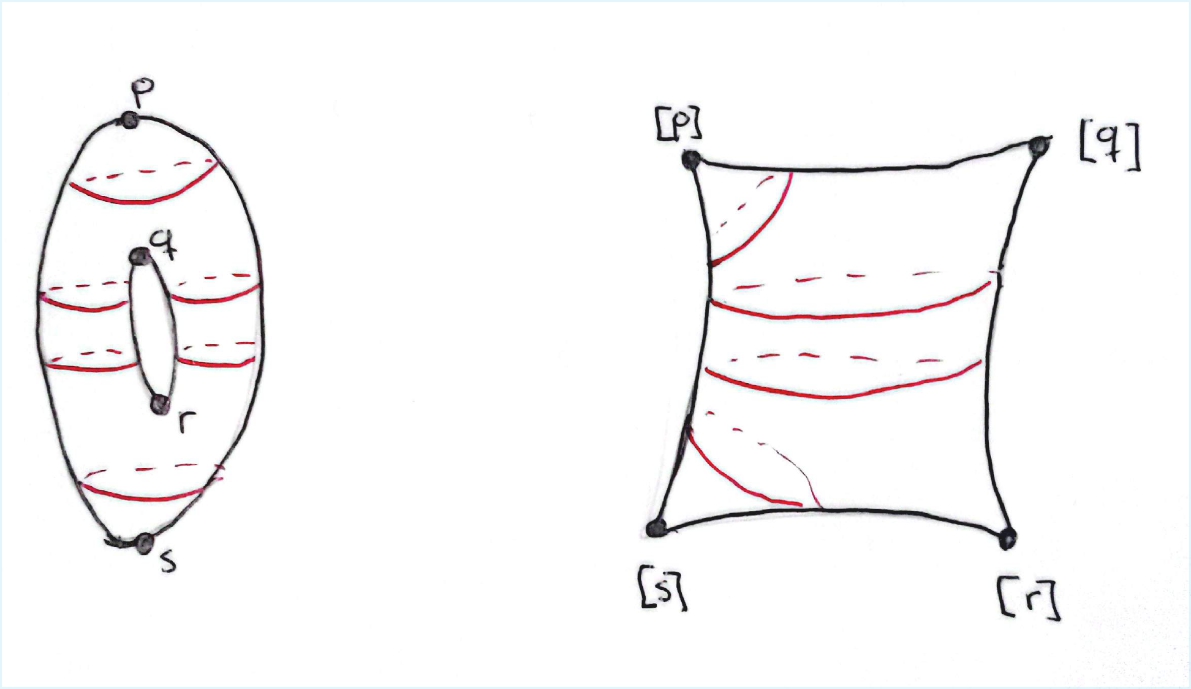}
	\caption{\footnotesize Foliation on the pillowcase.}
	\label{Fig:Pillow0}
\end{figure}
In next examples the pillowcase orbifold $\mathbb{T}^2/\mathbb{Z}_2$ is denoted by $Q$. We use $\theta$ and $\phi$ to denote the usual angle coordinates on $\mathbb{T}^2$, thus the corresponding angle forms are denoted by $d\theta$ and $d\phi$.

\begin{example}
Suppose that $X_1 = Q$ and $X_2 = \mathbb{T}^2$. Let $\overline{\omega}_1 = d\overline{h}$ and $\overline{\omega}_2 = p\,d\theta + q\,d\phi$, where $p,q$ are coprime integers. Under construction (a), the resulting closed 1-form $\overline{\omega}$ is transitive, and the singular foliation $\tilde{\mathcal{F}}_\omega$ has only compact leaves. Moreover, the period map factors through $\mathbb{Z}$.
See Figure \ref{Fig:Pillow1}.
\end{example}
\begin{figure}[H]
	\centering
	\includegraphics[width=0.4\textwidth]{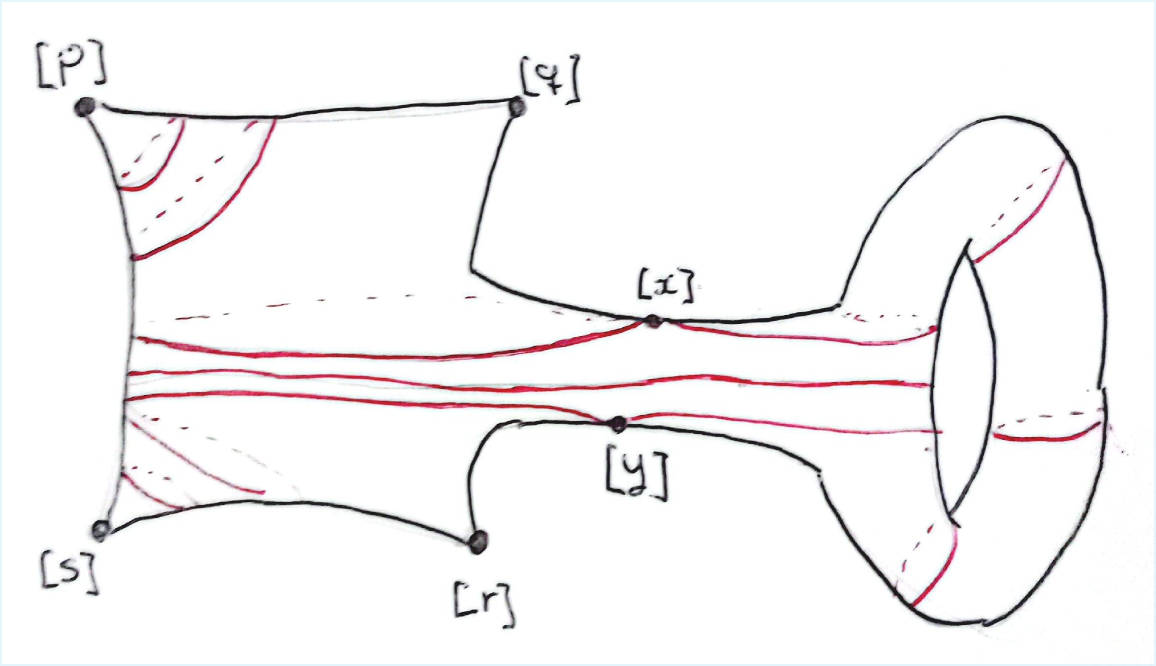}
	\caption{\footnotesize Connected sum of the pillowcase with the 2-torus under construction (a).}
	\label{Fig:Pillow1}
\end{figure}

\begin{example}
Suppose that $X_1 = Q$ and $X_2 = \mathbb{T}^2$. Let $\overline{\omega}_1 = d\overline{h}$ and $\overline{\omega}_2 = p\,d\theta + q\,d\phi$, where $p,q$ are linearly independent over $\mathbb{Q}$. Under construction (b), the resulting closed 1-form $\overline{\omega}$ is non-transitive, and the singular foliation $\tilde{\mathcal{F}}_\omega$ has both compact and noncompact leaves. Moreover, $X_c \cap X_{\infty}$ is a circle. See Figure \ref{Fig:Pillow2}.
\end{example}
\begin{figure}[H]
	\centering
	\includegraphics[width=0.4\textwidth]{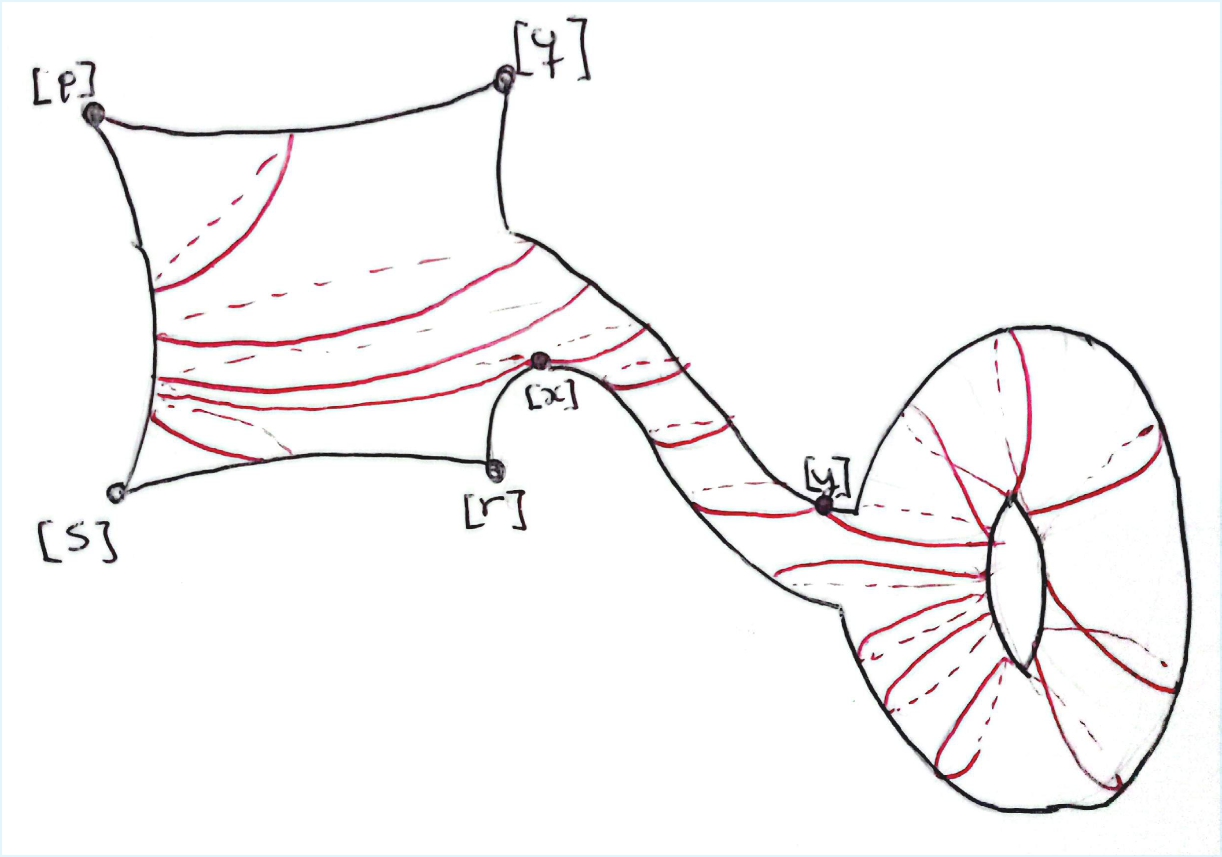}
	\caption{\footnotesize Connected sum of the pillowcase with the 2-torus under construction (b).}
	\label{Fig:Pillow2}
\end{figure}

\begin{example}
Suppose that $X_1 = Q$ and $X_2 = \mathbb{T}^2$. Let $\overline{\omega}_1 = d\overline{h}$ and $\overline{\omega}_2 = p\,d\theta + q\,d\phi$, where $p,q$ are linearly independent over $\mathbb{Q}$. Under construction (c), the resulting closed 1-form $\overline{\omega}$ is non-transitive, and the singular foliation $\tilde{\mathcal{F}}_\omega$ has both compact and noncompact leaves. Moreover, $X_c \cap X_{\infty}$ is a bouquet of two circles. See Figure \ref{Fig:Pillow3}.
\end{example}
\begin{figure}[H]
	\centering
	\includegraphics[width=0.4\textwidth]{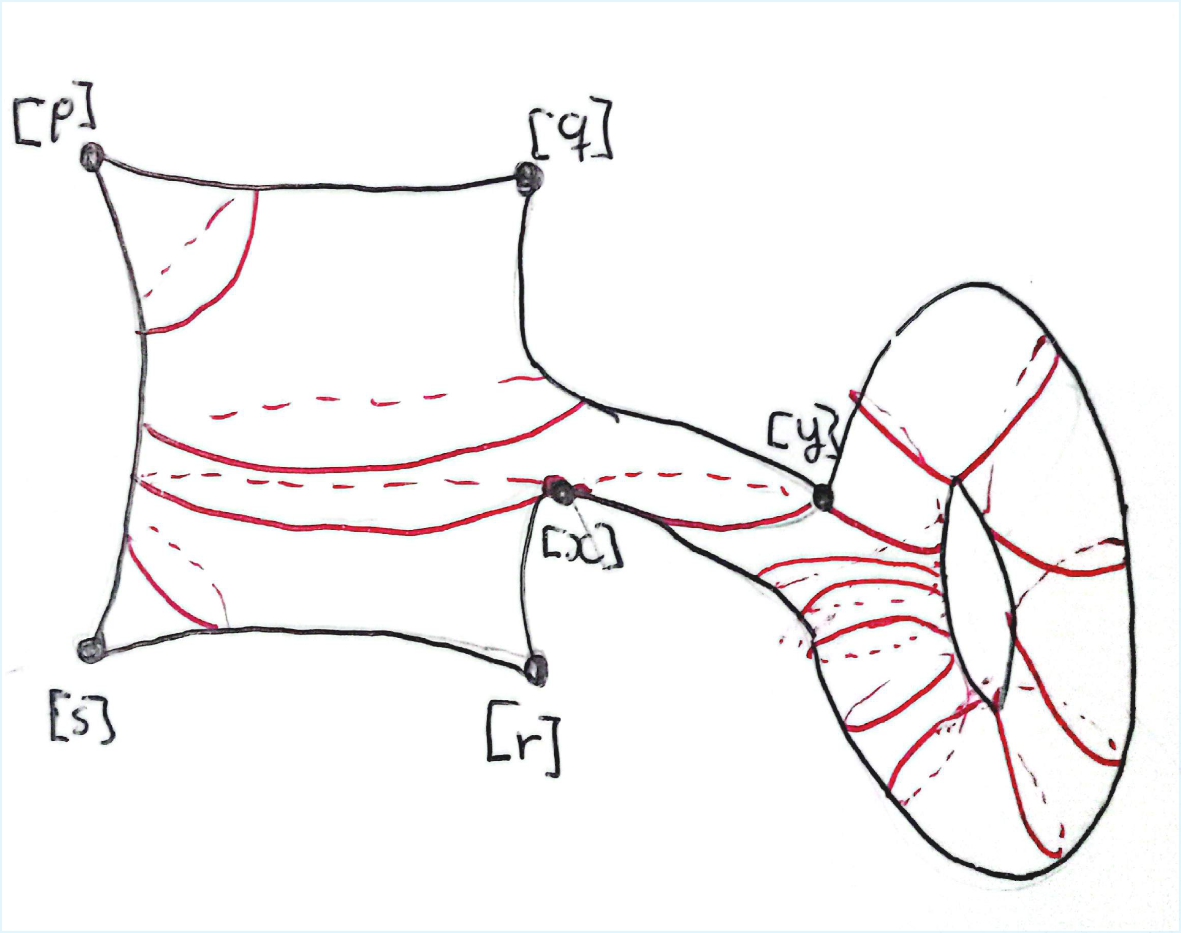}
	\caption{\footnotesize Connected sum of the pillowcase with the 2-torus under construction (c).}
	\label{Fig:Pillow3}
\end{figure}

\begin{example}
Consider $X_1=X_2=\mathbb{T}^2$ and suppose that $\overline{\omega}_1=pd\theta+qd\phi$ where $p,q$ are relatively prime integers and $\overline{\omega}_2=a\overline{\omega}_1$ where $a$ is an irrational number. Assume that the disks are $D_1$ and $D_2$ are such that their interiors intersect every leaf of the singular foliations $\tilde{\mathcal{F}}_{\omega_1}$ and
$\tilde{\mathcal{F}}_{\omega_2}$ at least twice. If one fixes the small disks in advance then the latter condition is automatically satisfied by taking the numbers $p$ and $q$ large enough. Under construction (a) one produces a transitive closed 1-form $\overline{\omega}$ such that all the leaves of the singular foliation $\tilde{\mathcal{F}}_{\omega}$ are non-compact, compare \cite[p. 138]{F}. 
\end{example}

\end{document}